\documentclass[11pt,oneside]{article}
\linespread{1.1}
\hoffset=-20pt
\voffset=-20pt
\textwidth=400pt
\textheight=565pt
\usepackage[latin1]{inputenc}
\usepackage[T1]{fontenc}
\usepackage{ae}
\usepackage{aecompl}
\usepackage{amsmath}
\usepackage{amsthm}
\usepackage{amsfonts}
\usepackage{amssymb}
\usepackage{mathrsfs}
\usepackage[dvips]{graphicx}
\usepackage{float}
\usepackage[all]{xy}
\usepackage{color}
\theoremstyle{plain}
\newtheorem{prop}{Proposition}[section]
\newtheorem{teo}[prop]{Theorem}

\newtheorem{defn}[prop]{Definition}

\newtheorem{cor}[prop]{Corollary}
\newtheorem{lem}[prop]{Lemma}

\theoremstyle{remark}
\newtheorem{oss}[prop]{Remark}
\definecolor{red}{rgb}{1,0,0}
\definecolor{green}{rgb}{0,1,0.2}

\title{Families of nodal curves in $\mathbb{P}^r$ \\ with the expected number of moduli}
\author{Edoardo Ballico, Luca Benzo, Claudio Fontanari}
\date{}
\begin{document}
\maketitle
\footnotetext{\noindent 2010 {\em Mathematics Subject Classification}. 14H10, 14H51.
\newline \noindent{{\em Keywords and phrases.} Nodal curve, Hilbert scheme, Moduli space, Reducible curve, Partial smoothing.}}
\begin{abstract}
Let $\mathscr{V}^{r}_{d,g, \delta}$ be the Hilbert scheme of nodal curves in $\mathbb{P}^r$ of degree $d$
and arithmetic genus $g$ with $\delta$ nodes. Under suitable numerical assumptions on $d$ and $g$, for every
$0 \le \delta \le g$ we construct an irreducible component of $\mathscr{V}^{r}_{d,g, \delta}$ having the
expected number of moduli.
\end{abstract}

\begin{section}{Introduction}

The notion of \emph{family of curves with the expected number of moduli} was introduced into the modern literature
by Edoardo Sernesi in his seminal paper \cite{Sernesi}. Roughly speaking, it refers to any irreducible component
of a suitable Hilbert scheme of projective curves such that the image of the natural projection to the corresponding
moduli space defined by forgetting the extrinsic data has the expected dimension.

In the case of nodal plane curves, the results presented in \cite{Sernesi} (see Theorem 4.2) are quite complete and
they have been later extended to nodal curves on smooth surfaces of general type by Flaminio Flamini in \cite{Flamini}
and to plane curves with both nodes and cusps by Concettina Galati in \cite{Galati}. The results obtained in \cite{Sernesi}
for smooth curves in $\mathbb{P}^r$ with $r \ge 3$, instead, were far to be sharp and since then they have been improved
in a series of contributions by several authors. For $r=3$ we are referring to \cite{Pareschi} by Giuseppe Pareschi
and to \cite{Walter} by Charles H. Walter (indeed, see the paragraph just after the statement of Theorem 0.3 on p. 303),
while for $r \ge 4$ to \cite{BallicoEllia} by the first author and Philippe Ellia and to \cite{Lopez} and \cite{Lopezbis} by Angelo F. Lopez. The method applied by Lopez is admittedly
the same as Sernesi's one (essentially, the smoothing of a reducible curve) and subsequent improvements are gained just
by building on clever and clever curves.

Here we address the case of nodal curves in $\mathbb{P}^r$ with $r \ge 3$ by careful partial smoothings of reducible
curves with rational components. Namely, let $V$ be an irreducible component of the locally closed subscheme $\mathscr{V}^r_{g,d,\delta}
\subset Hilb^r_{g,d}$ parameterizing stable curves with (exactly) $\delta$ nodes in $\mathbb{P}^r$, of arithmetic genus $g$ and degree $d$.
The universal family $\mathscr{G} \rightarrow V$ defines in a functorial way a morphism $\pi:V \rightarrow \overline{\mathcal{M}}^{\delta}_{g}$, where $\overline{\mathcal{M}}^{\delta}_{g} \subset \overline{\mathcal{M}}_{g}$
is the closure in $\overline{\mathcal{M}}_{g}$, the coarse moduli space of stable curves of genus $g$, of the subvariety parameterizing curves with (exactly) $\delta$ nodes. Hence we may formally introduce the following:

\begin{defn}\label{defnumbmod2}
The \emph{number of moduli of} $V$ is $\dim \pi(V)$. We say that $V$ has the \emph{expected number of moduli} if $\dim \pi(V)=\min \left\{3g-3-\delta,3g-3+\rho(g,r,d)-\delta\right\}$, where $\rho(g,r,d)= g - (r+1)(g-d+r)$ is the
Brill-Noether number.
\end{defn}
Our main result is the following special case of Theorem \ref{main}:
\begin{cor}
Let $r \geq 3$ and $d \geq (2r+2)r$ be integers. For all integers $g$ and $\delta$ such
that $d+\lfloor \frac{d}{r} \rfloor-r \leq g \leq d+ \lfloor \frac{d}{r} \rfloor$
and $0 \leq \delta \leq g$, there exists an irreducible nondegenerate nodal curve
$C_{\delta} \subset \mathbb{P}^r$ of arithmetic genus $g$ with $\delta$ nodes, such that
$[C_{\delta}]$ is a smooth point of an irreducible component $V \subset
\mathscr{V}^{r}_{d,g, \delta}$ having the expected number of moduli.
\end{cor}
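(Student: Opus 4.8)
The strategy I would follow is the classical one of constructing, and then partially smoothing, a reducible curve, adapted so that the general member of the resulting component is irreducible with precisely $\delta$ nodes. Write $d=qr+t$ with $0\le t<r$ and let $C_0\subset\mathbb{P}^r$ be a connected nodal curve $C_1\cup\cdots\cup C_n$, with $n=\lceil d/r\rceil$, whose components are rational normal curves — $q$ of degree $r$ and, if $t>0$, one of degree $t$, each spanning its own linear subspace — meeting transversally at nodes chosen as generally as possible, with dual graph having exactly $g+n-1$ edges (so that $p_a(C_0)=g$) and containing a distinguished spanning tree with $n-1$ edges, each $C_i$ being attached to the union of the preceding components along a controlled number of nodes. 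The hypotheses $d\ge(2r+2)r$ and $d+\lfloor d/r\rfloor-r\le g\le d+\lfloor d/r\rfloor$ are exactly what allow the $g+n-1$ nodes to be distributed among, and attached within, the $C_i$ with enough economy.

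Because $\deg\bigl(N_{C_0/\mathbb{P}^r}|_{C_i}\bigr)=\deg N_{C_i/\mathbb{P}^r}+\#\{\text{nodes of }C_0\text{ on }C_i\}$ and $N_{C_i/\mathbb{P}^r}$ is a split bundle on $\mathbb{P}^1$ with all summands of degree $\ge r+2$ (or, for the low-degree component, of degree bounded below in a controlled way), the restriction $N_{C_0/\mathbb{P}^r}|_{C_i}$ is a positive elementary modification of $N_{C_i/\mathbb{P}^r}$ and, under the numerical hypotheses, remains globally generated with $H^1=0$ and generated in sections also along the attaching divisor of $C_i$. Peeling off one component at a time, i.e. writing $C_0=Z\cup C_i$ with $Z=C_1\cup\cdots\cup C_{i-1}$ and feeding this into the Mayer--Vietoris sequences $0\to N_{C_0/\mathbb{P}^r}\to N_{C_0/\mathbb{P}^r}|_{Z}\oplus N_{C_0/\mathbb{P}^r}|_{C_i}\to N_{C_0/\mathbb{P}^r}|_{Z\cap C_i}\to 0$, yields $H^1(C_0,N_{C_0/\mathbb{P}^r})=0$; hence $[C_0]$ is a smooth point of the Hilbert scheme of curves of degree $d$ and arithmetic genus $g$ in $\mathbb{P}^r$, lying on a unique component $\mathcal{H}$ of dimension $\chi(N_{C_0/\mathbb{P}^r})=(r+1)d-(r-3)(g-1)$, and moreover the same input shows that the $g+n-1$ nodes of $C_0$ are independently smoothable.

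Smoothing the $n-1$ edges of the spanning tree together with $g-\delta$ of the remaining $g$ edges produces inside $\mathcal{H}$ an irreducible nondegenerate curve $C_\delta$ of arithmetic genus $g$ with exactly $\delta$ nodes — irreducibility forces us to smooth a whole spanning tree, which is possible precisely because $\delta\le g$. By semicontinuity $H^1(N_{C_\delta/\mathbb{P}^r})=0$, and since the equisingular normal sheaf $N'_{C_\delta}$ differs from $N_{C_\delta/\mathbb{P}^r}$ by a torsion sheaf of length $\delta$ at the nodes, the genericity of the construction also gives $H^1(N'_{C_\delta})=0$; thus $[C_\delta]$ is a smooth point of $\mathscr{V}^{r}_{d,g,\delta}$, lying on a unique component $V$ with $\dim V=\chi(N'_{C_\delta})=(r+1)d-(r-3)(g-1)-\delta$. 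For the number of moduli, a dimension count on the fibres of $\pi$ over a general point of $\pi(V)$ shows that they have dimension $\dim\mathrm{PGL}(r+1)+\dim W^r_d(C_\delta)$ once one knows that the general member of $V$ is linearly normal ($h^0(\mathcal{O}(1))=r+1$), carries only finitely many $g^r_d$'s, and has no automorphisms — all of which is read off from the model $C_0$, where $\mathcal{O}_{C_0}(1)$ restricts to the rigid bundle $\mathcal{O}_{\mathbb{P}^1}(r)$, resp. $\mathcal{O}_{\mathbb{P}^1}(t)$, on the components. Since a direct computation from the definition of $\rho$ gives $\dim V-\dim\mathrm{PGL}(r+1)=3g-3+\rho(g,r,d)-\delta$, and $\rho(g,r,d)\le 0$ for every $g$ in the prescribed range, we conclude $\dim\pi(V)=3g-3+\rho(g,r,d)-\delta=\min\{3g-3-\delta,\,3g-3+\rho(g,r,d)-\delta\}$, i.e. $V$ has the expected number of moduli.

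The real obstacle is the vanishing $H^1(C_0,N_{C_0/\mathbb{P}^r})=0$ together with the independent smoothability of the nodes: the number of nodes one is forced to put on — and to attach to — a single rational component grows linearly with $r$, so the elementary modifications of $N_{C_i/\mathbb{P}^r}$ and their restrictions to the attaching divisors are only barely positive, and it is exactly the inequalities $d\ge(2r+2)r$ and $d+\lfloor d/r\rfloor-r\le g\le d+\lfloor d/r\rfloor$ — used through a careful accounting of the combinatorics of the dual graph of $C_0$ — that keep every summand of every relevant bundle of degree $\ge -1$ on $\mathbb{P}^1$. A secondary difficulty, needed for the equality rather than a mere inequality in the moduli count, is the control of $W^r_d$ and of the linear normality of the general member of $V$.
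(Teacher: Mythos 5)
Your construction and the $H^1$-vanishing/partial-smoothing part run parallel to the paper's (which assembles $Z$ from an irreducible curve $\Gamma$ of degree $2r$, rational normal curves attached at $r+2$ or $r+3$ points, and one rational curve of degree $r+c$, and then smooths nodes selectively using $H^1(N'_Z)=0$). The genuine gap is in the moduli count. The vanishing $H^1(N'_{C_\delta})=0$ only gives that $[C_\delta]$ is a smooth point of a component $V$ of dimension $\chi(N'_{C_\delta})$, and from this one gets for free only the inequality $\dim\pi(V)\le\dim V-\dim\mathrm{PGL}(r+1)=3g-3+\rho(g,r,d)-\delta$, since every fibre of $\pi$ contains a $\mathrm{PGL}(r+1)$-orbit. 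The entire content of the statement is the reverse inequality, i.e.\ that the general fibre has dimension exactly $(r+1)^2-1$, equivalently that the $g^r_d$ cutting out the embedding is isolated; since $\rho<0$ throughout your range this is equivalent to the surjectivity of $\mu_0(\mathcal{O}_C(1))$ together with linear normality, equivalently to $h^0({T_{\mathbb{P}^r}}_{|C})=(r+1)^2-1$ (Propositions 2.2 and 2.3 of the paper). Your only justification for this is that $\mathcal{O}_{C_0}(1)$ restricts to rigid line bundles on the components; that controls neither the sections of ${T_{\mathbb{P}^r}}_{|C_0}$ nor the multiplication map on the nodal curve (whose $g^r_d$'s are governed by the gluing data at the $g+n-1$ nodes, not by the components separately), and verifying it is exactly where the work of Lemmas 3.1--3.5 and Proposition 3.6 goes. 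As written, your argument proves only the easy half of the equality $\dim\pi(V)=3g-3+\rho-\delta$.

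Moreover, your specific model obstructs the missing verification. The peeling argument needs, for each component $C_i$ with attaching divisor $\Delta_i$, both $h^1$-vanishing for the normal-bundle piece (to get $H^1(N'_{C_0})=0$) and $h^0({T_{\mathbb{P}^r}}_{|C_i}(-\Delta_i))=0$ (to get the section count $(r+1)^2-1$). For a rational normal curve of degree $r$, with $N\cong\mathcal{O}(r+2)^{\oplus r-1}$ and ${T_{\mathbb{P}^r}}_{|C_i}\cong\mathcal{O}(r+1)^{\oplus r}$, this forces $\deg\Delta_i\in\{r+2,r+3\}$ --- precisely the paper's choice. But for your rational normal curve of degree $t<r$ spanning a $\mathbb{P}^t$, the quotient $\mathcal{O}_{\mathbb{P}^1}(t)^{\oplus r-t}$ of $N_{C_i}$ forces $\deg\Delta_i\le t+1$, while the subbundle ${T_{\mathbb{P}^t}}_{|C_i}\cong\mathcal{O}(t+1)^{\oplus t}$ of ${T_{\mathbb{P}^r}}_{|C_i}$ forces $\deg\Delta_i\ge t+2$: the window is empty, and any nonzero section of ${T_{\mathbb{P}^t}}_{|C_i}(-\Delta_i)$ extends by zero to a section of ${T_{\mathbb{P}^r}}_{|C_0}$ not coming from $H^0(T_{\mathbb{P}^r})$. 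This is exactly why the paper handles the congruence class $c=d\bmod r$ with a single general smooth rational curve of degree $r+c$ attached at $r+3+c$ points (Lemma 3.4) rather than with a low-degree component. Your fibre-dimension route to the number of moduli is viable in principle (and your dimension bookkeeping, including $\rho<0$ on the whole range, checks out), but it requires the same hard input as the paper's coboundary-map argument, and that input is neither supplied nor obtainable from the model as you have set it up.
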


Notice that our upper bound on $g$ turns out to be not too far from optimal for components $V$ such that the general point $[C] \in V$ satisfies $h^1(C, N_C)=0$, where $N_C$ denotes the normal sheaf of $C$ in $\mathbb{P}^r$, since in this case $\chi (N_C) = (r+1)d-(r-3)(g-1) \ge 0$.
We also point out that the above statement improves a previous weaker result on the existence of
regular families of nodal projective curves due to the first author and Luca Chiantini
(see \cite{BC}, Theorem 1.3).

This research project was partially supported by GNSAGA of INdAM, by PRIN 2010--2011
"Geometria delle variet\`a algebriche",
and by FIRB 2012 "Moduli spaces and Applications".

We work over an algebraically closed field $k$ of characteristic zero.\end{section}
\begin{section}{Background results}
Let $C$ be a projective connected reduced nodal curve of arithmetic genus $g$. For every divisor $D$ on $C$ one can consider the cup-product map
$$\mu_0(D):H^{0}(D) \otimes H^{0}(\omega_C(-D)) \rightarrow H^{0}(\omega_C)$$
where $\omega_C$ is the dualizing sheaf on $C$. If $h^{0}(D)=m+1$ and $\deg D=d$, an obvious necessary condition for $\mu_0(D)$ to be surjective is that the Brill-Noether number $\rho(g,m,d)=g-(m+1)(g+m-d)=h^{0}(\omega_C)-h^{0}(D)\cdot h^{0}(\omega_C(-D))$ is non-positive.\\
If $C \subset \mathbb{P}^r, r \geq 2$, there is a 4-terms exact sequence
$$0 \rightarrow T_C \rightarrow {T_{\mathbb{P}^r}}_{|C} \rightarrow N_C \rightarrow T^1_C \rightarrow 0$$
where $T_C \doteq \textbf{Hom}(\Omega^1_C,\mathcal{O}_C)$ and $T^1_C$ is the \emph{first cotangent sheaf} of $C$, which is a torsion sheaf supported on the singular locus $S$ of $C$ and having stalk $\mathbb{C}$ at each of the points (see \cite{S}, Proposition 1.1.9 (ii)). The sheaf $N'_C \doteq \ker \left\{N_C \rightarrow T^1_C \right\}$ is called the \emph{equisingular normal sheaf} of $C$.\\
For each $p \in S$ let $T^1_p$ denote the restriction to $p$ of $T^1_C$ extended by zero on $C$.\\
From the deformation-theoretic interpretation of the cohomology spaces associated to these sheaves, it follows that, if the cohomology maps $H^{0}(N_C) \rightarrow H^{0}(T^1_p)$ are surjective for every $p \in S$ and $H^{1}(N_C)=(0)$, then the curve $C$ (flatly) deforms to a smooth curve in $\mathbb{P}^r$ i.e. it is \emph{smoothable in $\mathbb{P}^r$}. Note that in particular $C$ is smoothable if $H^{1}(N'_C)=(0)$ (see \cite{Sernesi}, Section 1 for an extensive discussion of all this topic).
We recall the following well-known result:
\begin{prop}
\label{vgddelta}
Let $C \subset \mathbb{P}^r$ be a stable curve of degree $d$ and arithmetic genus $g$ with $\delta$ nodes and let $N'_C$ be the equisingular normal sheaf of $C$ in $\mathbb{P}^r$. One has
\begin{itemize}
\item[(i)] $T_{[C]}\mathscr{V}^{r}_{g,d,\delta} \cong H^{0}(N'_C)$;
\item[(ii)] $\chi(N'_C) \leq \dim_{[C]}\mathscr{V}^{r}_{g,d,\delta} \leq h^{0}(N'_C)$;
\item[(iii)] if $H^{1}(N'_C)=(0)$, then $\mathscr{V}^{r}_{g,d,\delta}$ is smooth of dimension $h^{0}(N'_C)$ at $[C]$. Moreover, for every $0 \leq \delta' \leq \delta$ there exists a deformation $C'$ of $C$ which is a nodal curve with (exactly) $\delta'$ nodes.
\end{itemize}
\end{prop}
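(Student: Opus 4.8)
The plan is to deduce all three assertions from the equisingular deformation theory of nodal curves in $\mathbb{P}^r$ recalled above, following \cite{S} and \cite{Sernesi}. For \emph{(i)}, near $[C]$ the subscheme $\mathscr{V}^{r}_{g,d,\delta}\subset Hilb^r_{g,d}$ is the locus of embedded deformations of $C$ that preserve each of the $\delta$ nodes. A first-order embedded deformation is an element of $H^{0}(N_C)=T_{[C]}Hilb^r_{g,d}$, and it keeps the node $p$ precisely when its image in $H^{0}(T^1_p)$ vanishes; since $N'_C=\ker\{N_C\to T^1_C\}$ and $T^1_C=\bigoplus_{p\in S}T^1_p$, left exactness of global sections gives $H^{0}(N'_C)=\ker\{H^{0}(N_C)\to H^{0}(T^1_C)\}$, which is therefore $T_{[C]}\mathscr{V}^{r}_{g,d,\delta}$.

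For \emph{(ii)}, this equisingular functor has tangent space $H^{0}(N'_C)$ and a complete obstruction theory with values in $H^{1}(N'_C)$, so its pro-representing local ring is a quotient of a power series ring in $h^{0}(N'_C)$ variables by at most $h^{1}(N'_C)$ relations. The upper bound $\dim_{[C]}\mathscr{V}^{r}_{g,d,\delta}\le h^{0}(N'_C)$ is the general inequality $\dim_x X\le\dim_{\kappa(x)}T_xX$, and the lower bound $\dim_{[C]}\mathscr{V}^{r}_{g,d,\delta}\ge h^{0}(N'_C)-h^{1}(N'_C)=\chi(N'_C)$ follows from Krull's height theorem applied to those relations. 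If moreover $H^{1}(N'_C)=(0)$ there are no relations, so $\mathscr{V}^{r}_{g,d,\delta}$ is smooth at $[C]$ of dimension $h^{0}(N'_C)$, which is the first part of \emph{(iii)}.

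For the partial-smoothing statement in \emph{(iii)}, fix a subset $\Delta\subset S$ with $|\Delta|=\delta'$ (the nodes to be retained) and set $\mathcal{N}_\Delta=\ker\{N_C\to\bigoplus_{p\in\Delta}T^1_p\}$, the sheaf governing embedded deformations that are equisingular exactly at the nodes of $\Delta$. From the exact sequence $0\to N'_C\to\mathcal{N}_\Delta\to\bigoplus_{p\in S\setminus\Delta}T^1_p\to 0$ and the hypothesis $H^{1}(N'_C)=(0)$ one obtains both $H^{1}(\mathcal{N}_\Delta)=(0)$, so these deformations are unobstructed, and the surjectivity of $H^{0}(\mathcal{N}_\Delta)\to\bigoplus_{p\in S\setminus\Delta}H^{0}(T^1_p)$. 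Choosing a first-order deformation inside $\mathcal{N}_\Delta$ that is nonzero at every node of $S\setminus\Delta$ and integrating it, the general member $C'$ of the resulting family keeps every node of $\Delta$ (the family is equisingular there) and smooths every node of $S\setminus\Delta$ (an open condition already satisfied to first order), hence is nodal with exactly $\delta'$ nodes; by semicontinuity $h^{1}(N'_{C'})=0$ as well.

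The computations above are routine; the one point that deserves care, and on which I would concentrate, is the bookkeeping underlying \emph{(i)} and the construction of $\mathcal{N}_\Delta$ in \emph{(iii)}, namely that imposing ``a node at $p$'' is a single transverse condition which globalizes into the sheaves $N'_C$ and $\mathcal{N}_\Delta$. This reduces to the local model of a node, whose versal deformation is one-dimensional with $T^1\cong k$, and is precisely what is worked out in \cite{S} and \cite{Sernesi}, to which the proof ultimately appeals.
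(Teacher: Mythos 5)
Your proof is correct and follows essentially the same route as the paper: all three parts are deduced from the fact that $H^{0}(N'_C)$ and $H^{1}(N'_C)$ are the tangent and obstruction spaces for locally trivial embedded deformations, and the partial smoothing comes from the surjection of $H^{0}(N_C)$ (in your version, $H^{0}(\mathcal{N}_\Delta)$) onto the relevant summands of $H^{0}(T^1_C)$ granted by $H^{1}(N'_C)=0$. Your intermediate sheaf $\mathcal{N}_\Delta$ is a slightly more explicit bookkeeping device for the unobstructedness of the ``equisingular exactly along $\Delta$'' deformations, which the paper leaves implicit, but it is the same argument.
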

\begin{proof}
$(i)$, $(ii)$ and the first part of $(iii)$ follow from the fact that $H^{0}(N'_C)$ and $H^{1}(N'_C)$ are, respectively, the tangent and an obstruction space for the functor of locally trivial embedded deformations of $C$ in $\mathbb{P}^r$ (see again \cite{Sernesi}, Section 1 for details). For the second part of $(iii)$, the cohomology sequence associated to the exact sequence $0 \rightarrow N'_C \rightarrow N_C \rightarrow T^1_C \rightarrow 0$, where $T^1_C$ is the first cotangent sheaf of $C$, gives the surjectivity of the map $H^{0}(N_C) \rightarrow H^{0}(T^1_C)$. Hence for every subset of $\delta' \leq \delta$ nodes of $C$, there exists a deformation of $C$ which is locally trivial at these nodes and smooths the others.
\end{proof}
\begin{prop}
\label{condizioniperexpected}
Let $C \subset \mathbb{P}^r$ be a nondegenerate stable curve of degree $d$ and arithmetic genus $g$ with $\delta$ nodes. Assume that the following conditions are satisfied:
\begin{itemize}
\item[(i)] $H^{0}(\mathcal{O}_C(1))=r+1$ i.e. $C$ is linearly normal;
\item[(ii)] $\mu_0(\mathcal{O}_C(1))$ has maximal rank;
\item[(iii)] $H^{1}(N'_C)=(0)$.
\end{itemize}
Then $C$ is parameterized by a smooth point of an irreducible component $V \subset \mathscr{V}^r_{g,d,\delta}$ having the expected number of moduli.
\end{prop}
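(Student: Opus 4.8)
The plan is to combine Proposition \ref{vgddelta} with a dimension count relating the fiber of $\pi$ to the kernel of $\mu_0(\mathcal{O}_C(1))$. By hypothesis (iii) and Proposition \ref{vgddelta}(iii), the point $[C]$ is a smooth point of an irreducible component $V \subseteq \mathscr{V}^r_{g,d,\delta}$ of dimension $h^0(N'_C)$. I would first compute $h^0(N'_C)$ via the Euler characteristic: since $H^1(N'_C)=(0)$ we have $h^0(N'_C)=\chi(N'_C)$, and from the exact sequences $0 \to N'_C \to N_C \to T^1_C \to 0$ and $0 \to T_C \to {T_{\mathbb{P}^r}}_{|C} \to N_C \to T^1_C \to 0$ one obtains $\chi(N'_C)=\chi({T_{\mathbb{P}^r}}_{|C})-\chi(T_C)=(r+1)d+(r+1)(1-g) - \chi(T_C)$; using that $C$ is nodal so $\deg T_C$ and $\chi(T_C)$ are controlled by $3-3g+\delta$ (the expected dimension of the equisingular locus in moduli), this gives $\dim V = h^0(N'_C) = \dim \mathrm{Aut}(\mathbb{P}^r) + 3g-3-\delta + \rho(g,r,d)$ whenever $\rho \le 0$, and $\dim V = \dim \mathrm{Aut}(\mathbb{P}^r) + 3g-3-\delta$ together with a correction term when $\rho \ge 0$. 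The precise bookkeeping here is the routine part, but one must be careful with the nodal contributions.

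The heart of the argument is identifying the generic fiber of $\pi : V \to \overline{\mathcal{M}}^\delta_g$. Over the locus of $[C]$ there is an action of $\mathrm{PGL}(r+1)$ on $V$ whose orbits are contained in the fibers of $\pi$; since $C$ is nondegenerate this action is (generically) free, contributing $\dim \mathrm{PGL}(r+1) = (r+1)^2-1$ to the fiber dimension. Beyond the projective automorphisms, the fiber of $\pi$ through $[C]$ records the choices of embedding line bundle $L$ (with $[L] \in \mathrm{Pic}^d$ and $h^0(L)=r+1$) and of a frame in $H^0(L)$; the dimension of this parameter space of linear series, near a point where $\mu_0$ has maximal rank, is governed by $\dim \mathrm{Coker}\,\mu_0$ when $\rho \le 0$ (the scheme $G^r_d$ being smooth of dimension $\rho$ plus nothing extra, so $\pi$ is generically finite onto its image) and is forced by maximal rank of $\mu_0$ when $\rho \ge 0$. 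Concretely: the map $\pi$ factors through the forgetful map remembering only $(C, L)$, and the fiber of the latter over $[C]$ has dimension equal to $\dim \mathrm{PGL}(r+1) + \dim_{[L]} W^r_d(C)$, while $\dim_{[L]} W^r_d(C) = \rho + h^0 \cdot (\text{something})$ — but maximal rank of $\mu_0(L)$ is exactly the Petri-type condition ensuring $W^r_d(C)$ (hence the relevant fiber) has the expected dimension $\min\{\rho,\ldots\}$-adjusted value at $[L]$.

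Putting these together: $\dim \pi(V) = \dim V - \dim(\text{generic fiber})$, and the generic fiber has dimension $\dim \mathrm{PGL}(r+1)$ plus the expected dimension of the space of $\mathfrak{g}^r_d$'s on a fixed curve, which by the maximal-rank hypothesis (ii) equals $\max\{0, \rho(g,r,d)\} + (\text{frame choices cancelling against linear normality from (i)})$. Subtracting, the $\mathrm{PGL}$ term and the linear-series term combine with the formula for $\dim V$ to yield exactly $\dim\pi(V) = \min\{3g-3-\delta,\ 3g-3+\rho(g,r,d)-\delta\}$, which is the assertion. I expect the main obstacle to be the careful analysis of the fiber of $\pi$ when $\rho(g,r,d) \ge 0$: there one must show that the maximal-rank condition on $\mu_0$ forces $\pi$ to be \emph{dominant} onto the $(3g-3-\delta)$-dimensional target (equivalently, that $H^0(\omega_C)$ is spanned, so deformations of $C$ in $\mathbb{P}^r$ surject onto all equisingular deformations of the abstract curve), rather than merely generically finite as in the $\rho \le 0$ case; linear normality (i) is what ensures no spurious directions in the frame bundle inflate the fiber.
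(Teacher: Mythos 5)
Your strategy --- compute $\dim V=h^0(N'_C)=\chi(N'_C)$ and then subtract the dimension of the generic fibre of $\pi$, viewed as a $\mathrm{PGL}(r+1)$-bundle over (part of) $G^r_d(C)$ --- is genuinely different from the paper's argument and could in principle be carried through, but as written its central step is missing. Everything hinges on the bound $\dim_{[C]}\pi^{-1}(\pi([C]))\leq (r+1)^2-1+\max\{0,\rho\}$, and this is exactly what you never establish: you say the fibre dimension ``is governed by'' $\mathrm{Coker}\,\mu_0$ and ``is forced by'' maximal rank, and you explicitly defer ``the main obstacle'' in the case $\rho\geq 0$. The identification $\dim T_{(L,V)}G^r_d=\rho+\dim\ker\mu_0$ that you are implicitly invoking is the smooth-curve statement of Brill--Noether/Petri theory; here $C$ is a stable nodal, possibly reducible, curve, so the whole $W^r_d$/$G^r_d$ apparatus (generalized Jacobian, duality with $\omega_C$, fixed multidegree) has to be re-justified before the count means anything. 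There are also smaller slips: $\chi({T_{\mathbb{P}^r}}_{|C})=(r+1)(d+1-g)-(1-g)$, not $(r+1)(d+1-g)$; the outcome $h^0(N'_C)=(r+1)^2-1+3g-3-\delta+\rho$ is a single formula valid for either sign of $\rho$, not a case distinction with a ``correction term''; and no separate ``dominance'' argument is needed when $\rho\geq 0$, since an upper bound on the local fibre dimension at $[C]$ together with upper semicontinuity of fibre dimension already yields $\dim\pi(V)\geq\dim V-((r+1)^2-1)-\rho=3g-3-\delta$.

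The paper avoids all of this by working infinitesimally. The differential of $\pi$ at $[C]$ is the coboundary $\kappa_C:H^0(N'_C)\to H^1(T_C)$ of $0\to T_C\to {T_{\mathbb{P}^r}}_{|C}\to N'_C\to 0$; hypothesis (iii) gives $\mathrm{rk}(\kappa_C)=h^1(T_C)-h^1({T_{\mathbb{P}^r}}_{|C})=3g-3-\delta-h^1({T_{\mathbb{P}^r}}_{|C})$, the Euler sequence and (i) identify $H^1({T_{\mathbb{P}^r}}_{|C})$ with $(\ker\mu_0(\mathcal{O}_C(1)))^{*}$, hypothesis (ii) then produces the minimum, and openness of (i)--(ii) upgrades the rank at $[C]$ to $\dim\pi(V)$. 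This is the same information your fibre count is after ($\ker\kappa_C$ is the tangent space to the fibre, of dimension $h^0({T_{\mathbb{P}^r}}_{|C})=(r+1)^2-1+\max\{0,\rho\}$), but it never requires setting up Brill--Noether theory for nodal curves. To salvage your route you would end up replacing the appeal to $G^r_d$ by precisely this tangent-space computation --- at which point you have reproduced the paper's proof.
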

\begin{proof}
Since $H^{1}(N'_C)=(0)$, by Proposition \ref{vgddelta} the curve $C$ is parameterized by a smooth point of an irreducible component $V \subset \mathscr{V}^r_{g,d,\delta}$.\\
By \cite{S}, Theorem 2.4.1, the vector space $H^{1}(T_C)$ parameterizes isomorphism classes of first-order locally trivial deformations of $C$. The coboundary map $\kappa_C: H^{0}(N'_C) \rightarrow H^{1}(T_C)$ in the cohomology sequence of the exact sequence
$$0 \rightarrow T_C \rightarrow {T_{\mathbb{P}^r}}_{|C} \rightarrow N'_C \rightarrow 0$$
has rank $rk(\kappa_C)=3g-3-\delta-h^{1}({T_{\mathbb{P}^r}}_{|C})$. Since $C$ is linearly normal, the Euler sequence restricted to $C$ writes as
$0 \rightarrow \mathcal{O}_C \rightarrow H^{0}(\mathcal{O}_C(1))^{*} \otimes \mathcal{O}_C(1) \rightarrow {T_{\mathbb{P}^r}}_{|C} \rightarrow 0$.
It follows that $H^{1}({T_{\mathbb{P}^r}}_{|C}) \cong \ker \mu_{0}(\mathcal{O}_C(1))^{*}$ and therefore, since $\mu_{0}(\mathcal{O}_C(1))$ has maximal rank, $rk(\kappa_C)=\min \left\{3g-3-\delta,3g-3+\rho(g,r,d)-\delta \right\}$.\\
Since properties $(i)$ and $(ii)$ are open in $V$, the rank of the map $\kappa_C$ is constant on an open subset of $V$ and thus one has that $rk(\kappa_C)=\dim \pi(V)$. Therefore $V$ has the expected number of moduli.
\end{proof}
\begin{prop}
\label{condizioniequivalentialinearmentenormaleerangomassimo}
Let $C \subset \mathbb{P}^r$ be a connected reduced nondegenerate nodal curve of degree $d$. The following are equivalent:
\begin{itemize}
\item[(i)] $H^{0}(\mathcal{O}_{C}(1))=r+1$ and $\mu_0(\mathcal{O}_C(1))$ is surjective;
\item[(ii)] $h^{0}({T_{\mathbb{P}^r}}_{|C})=(r+1)^2-1$;
\item[(iii)] The restriction map $H^{0}(T_{\mathbb{P}^r}) \rightarrow H^{0}({T_{\mathbb{P}^r}}_{|C})$ is an isomorphism.
\end{itemize}
\end{prop}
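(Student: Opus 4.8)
The plan is to extract all three statements from two incarnations of the Euler sequence. On $\mathbb{P}^r$ itself it reads $0\rightarrow\mathcal{O}\rightarrow V^{*}\otimes\mathcal{O}(1)\rightarrow T_{\mathbb{P}^r}\rightarrow 0$ with $V:=H^{0}(\mathbb{P}^r,\mathcal{O}(1))$, $\dim V=r+1$; since $H^{1}(\mathcal{O}_{\mathbb{P}^r})=0$ this gives the standard value $h^{0}(T_{\mathbb{P}^r})=(r+1)^{2}-1$. Restricting it to $C$ — which stays exact because $T_{\mathbb{P}^r}$ is locally free — yields
$$0\rightarrow\mathcal{O}_C\rightarrow V^{*}\otimes\mathcal{O}_C(1)\rightarrow {T_{\mathbb{P}^r}}_{|C}\rightarrow 0.$$
Throughout I would use that $H^{0}(\mathcal{O}_C)=k$ (as $C$ is connected and reduced) and that $C$ is nondegenerate precisely when the restriction $V\rightarrow H^{0}(\mathcal{O}_C(1))$ is injective, with linear normality of $C$ being the case where this map is an isomorphism.

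The implication $(iii)\Rightarrow(ii)$ is immediate: an isomorphism $H^{0}(T_{\mathbb{P}^r})\xrightarrow{\sim}H^{0}({T_{\mathbb{P}^r}}_{|C})$ forces $h^{0}({T_{\mathbb{P}^r}}_{|C})=(r+1)^{2}-1$. For $(ii)\Rightarrow(iii)$, the two spaces then having equal dimension, it is enough to show the restriction map $\rho\colon H^{0}(T_{\mathbb{P}^r})\rightarrow H^{0}({T_{\mathbb{P}^r}}_{|C})$ is injective for every connected reduced nondegenerate $C$. I would get this from the morphism of long exact sequences obtained by restricting the Euler sequence of $\mathbb{P}^r$ to $C$: the left-hand terms $H^{0}(\mathcal{O}_{\mathbb{P}^r})\rightarrow H^{0}(\mathcal{O}_C)$ are both $k$ and agree, while the middle terms $V^{*}\otimes V\rightarrow V^{*}\otimes H^{0}(\mathcal{O}_C(1))$ are injective by nondegeneracy, so a short diagram chase shows a global vector field vanishing on $C$ must already be zero. (Alternatively, the zero locus of a nonzero section of $T_{\mathbb{P}^r}$ is a disjoint union of proper linear subspaces, namely the projectivized eigenspaces of a linear representative, and hence cannot contain a connected nondegenerate curve.)

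For $(i)\Leftrightarrow(ii)$ I would read off the cohomology of the restricted Euler sequence,
$$0\rightarrow H^{0}(\mathcal{O}_C)\rightarrow V^{*}\otimes H^{0}(\mathcal{O}_C(1))\xrightarrow{\alpha} H^{0}({T_{\mathbb{P}^r}}_{|C})\xrightarrow{\partial} H^{1}(\mathcal{O}_C)\xrightarrow{\beta} V^{*}\otimes H^{1}(\mathcal{O}_C(1)),$$
which gives $h^{0}({T_{\mathbb{P}^r}}_{|C})=(r+1)\,h^{0}(\mathcal{O}_C(1))-1+\dim\ker\beta$. As $h^{0}(\mathcal{O}_C(1))\geq r+1$ with equality exactly when $C$ is linearly normal, and $\dim\ker\beta\geq 0$, we get $h^{0}({T_{\mathbb{P}^r}}_{|C})=(r+1)^{2}-1$ if and only if $C$ is linearly normal and $\beta$ is injective. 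It then remains to identify, for linearly normal $C$, the condition $\ker\beta=0$ with surjectivity of $\mu_{0}(\mathcal{O}_C(1))$: by Serre duality on the Gorenstein curve $C$ one has $H^{1}(\mathcal{O}_C)^{\vee}\cong H^{0}(\omega_C)$ and $H^{1}(\mathcal{O}_C(1))^{\vee}\cong H^{0}(\omega_C(-1))$, and the transpose of $\beta$ is (up to sign) the cup-product map $V\otimes H^{0}(\omega_C(-1))\rightarrow H^{0}(\omega_C)$, which for linearly normal $C$ is exactly $\mu_{0}(\mathcal{O}_C(1))$; hence $\beta$ injective $\Leftrightarrow$ $\mu_{0}(\mathcal{O}_C(1))$ surjective, so $(ii)\Leftrightarrow(i)$.

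The step that I expect to require the most care is this last Serre-duality identification of the connecting map $\beta$ with the transpose of $\mu_{0}(\mathcal{O}_C(1))$; it is the very same mechanism already invoked in the proof of Proposition \ref{condizioniperexpected} (the isomorphism $H^{1}({T_{\mathbb{P}^r}}_{|C})\cong\ker\mu_{0}(\mathcal{O}_C(1))^{*}$), so I would rely on \cite{Sernesi}, Section 1, rather than redo it. Everything else is routine bookkeeping with the two Euler sequences together with $H^{0}(\mathcal{O}_C)=k$ and the injectivity of $V\rightarrow H^{0}(\mathcal{O}_C(1))$.
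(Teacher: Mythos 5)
Your proof is correct: the dimension count from the restricted Euler sequence, the injectivity of restriction for a connected nondegenerate curve, and the Serre-duality identification of the connecting map $\beta$ with the transpose of $\mu_0(\mathcal{O}_C(1))$ all check out (the only delicate point, the identification of $\beta^{\vee}$ with the multiplication map, is exactly the mechanism the paper already uses in Proposition \ref{condizioniperexpected}). The paper itself gives no argument but defers verbatim to Pareschi's Proposition 1.1.1, whose proof is precisely this Euler-sequence computation, so your proposal is essentially the same approach with the details written out.
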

\begin{proof}
The proof goes on exactly as the one of \cite{Pareschi}, Proposition 1.1.1.
\end{proof}
Hence in order to find a component of $\mathscr{V}^r_{g,d,\delta}$ with the expected number of moduli it is sufficient to find a nondegenerate stable curve $C \subset \mathbb{P}^r$ with $\delta$ nodes such that $H^1(N'_C)=0$ and $h^{0}({T_{\mathbb{P}^r}}_{|C})=(r+1)^2-1$.\\
Let $X \subset \mathbb{P}^r$ be a connected reduced locally complete intersection curve, let $p_1,...,p_k$ be smooth points of $X$ and let $T \doteq \left\{p_1,...,p_k\right\}$.
Let $\mathcal{A}$ be the category of local artinian $k$-algebras, let $A \in \mathcal{A}$ and consider the two covariant functors
$$H^{T}_{X}, H^{\prime T}_{X}:\mathcal{A} \rightarrow \text{Sets}$$
defined as
$$H^{T}_{X}(A) \doteq \left\{\text{deformations of } X \text{ in } \mathbb{P}^r \text{ over }A \text{ keeping } T \text{ fixed}\right\}$$
$$H^{\prime T}_{X}(A) \doteq \left\{\text{locally trivial deformations of } X \text{ in } \mathbb{P}^r \text{ over } A \text{ keeping } T \text{ fixed}\right\}.$$
The \emph{tangent spaces to the functors} $H^{T}_{X}, H^{\prime T}_{X}$ are by definition the spaces of the respective first-order deformations i.e. $H^{T}_{X}(k[\epsilon])$ and $H^{\prime T}_{X}(k[\epsilon])$, where $k[\epsilon]$ is the ring of dual numbers.
\begin{lem}
\label{nc-t}
Let $X \subset \mathbb{P}^r$ be a connected reduced locally complete intersection curve, let $p_1,...,p_k$ be smooth points of $X$ and let $T \doteq \left\{p_1,...,p_k\right\}$. Then one has $H^{T}_{X}(k[\epsilon]) \cong H^{0}(N_X(-T))$ and $H^{\prime T}_{X}(k[\epsilon]) \cong H^{0}(N'_X(-T))$. Moreover $H^{1}(N_X(-T))$ (respectively, $H^{1}(N'_X(-T))$) is an obstruction space for the functor $H^{T}_{X}$ (respectively, $H^{\prime T}_{X}$).
\end{lem}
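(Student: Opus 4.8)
The plan is to bootstrap from the well-known description of the unconstrained functor of embedded deformations. Recall (see \cite{S} and \cite{Sernesi}, Section 1) that the functor of embedded deformations of $X$ in $\mathbb{P}^r$ has tangent space $H^{0}(N_X)$ and admits $H^{1}(N_X)$ as an obstruction space, while its subfunctor of locally trivial embedded deformations has tangent space $H^{0}(N'_X)$ and admits $H^{1}(N'_X)$ as an obstruction space. We interpret ``keeping $T$ fixed'' as ``containing the constant family $T \times \operatorname{Spec} A$''. Since $p_1,\dots,p_k$ are smooth points of $X$, the reduced $0$-dimensional subscheme $T$ is an effective Cartier divisor on $X$, so its ideal sheaf $I_{T/X} \subset \mathcal{O}_X$ is the line bundle $\mathcal{O}_X(-T)$; as $X$ is a local complete intersection in $\mathbb{P}^r$ the sheaf $N_X$ is locally free, and $N'_X$ coincides with $N_X$ near $T$ (the first cotangent sheaf being supported on the nodes). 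Tensoring $0 \to I_{T/X} \to \mathcal{O}_X \to \mathcal{O}_T \to 0$ by $N_X$ (resp. by $N'_X$) therefore yields a short exact sequence $0 \to N_X(-T) \to N_X \to N_X|_T \to 0$ (resp. $0 \to N'_X(-T) \to N'_X \to N'_X|_T \to 0$), identifying $N_X(-T)$ (resp. $N'_X(-T)$) with the subsheaf of sections vanishing along $T$.

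For the tangent spaces, I would proceed by a local computation at each $p_i$. A first-order embedded deformation of $X$ is a section $\xi \in H^{0}(N_X)$; over a trivialising affine $U \ni p_i$ with $X \cap U = V(f_1,\dots,f_{r-1})$, it is represented by $(g_j \bmod I_X)_j$ and the deformed family is $V(f_1 + \epsilon g_1, \dots, f_{r-1} + \epsilon g_{r-1})$. The constant family $\{p_i\} \times \operatorname{Spec} k[\epsilon]$ is contained in it if and only if $g_j(p_i) = 0$ for all $j$, i.e. if and only if $\xi$ vanishes at $p_i$. Carrying this out at every point of $T$ identifies $H^{T}_{X}(k[\epsilon])$ with $H^{0}(N_X(-T))$, compatibly with the vector space structures. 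The same argument, applied to the subfunctor of locally trivial deformations (which is governed by $N'_X$ near the nodes and by $N_X = N'_X$ near each $p_i$), gives $H^{\prime T}_{X}(k[\epsilon]) \cong H^{0}(N'_X(-T))$.

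For the obstruction statement I would run the standard local-to-global argument with the constraint built in. Given a small extension $0 \to J \to A' \to A \to 0$ in $\mathcal{A}$, an element $\eta \in H^{T}_{X}(A)$, and a finite affine cover $\{U_\alpha\}$ of $\mathbb{P}^r$, the restriction $\eta|_{U_\alpha}$ lifts to $A'$ while keeping the points of $T$ in $U_\alpha$ fixed: over an affine one first extends $\eta|_{U_\alpha}$ as an ordinary embedded deformation (unobstructed, affine schemes carrying no higher cohomology) and then corrects the lift by a section of $N_X$ over $U_\alpha$ so as to return the points of $T$ to their constant positions, using that global sections of $N_X$ over an affine surject onto their values at the finite set $T \cap U_\alpha$. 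Two such lifts differ on an overlap $U_\alpha \cap U_\beta$ by a section of $N_X \otimes J$ vanishing at every point of $T$ there (both lifts keeping $T$ fixed), hence by a section of $N_X(-T) \otimes J$; these differences form a \v{C}ech $1$-cocycle whose class $\mathrm{ob}(\eta) \in H^{1}(N_X(-T)) \otimes_k J$ is independent of the choices, functorial in $A' \to A$, and vanishes precisely when the local lifts can be modified — each by an element of $H^{0}(N_X(-T)) \otimes J$, which preserves the ``$T$ fixed'' condition — so as to glue into a global element of $H^{T}_{X}(A')$. Thus $H^{1}(N_X(-T))$ is an obstruction space for $H^{T}_{X}$; the identical argument with $N'_X$ in place of $N_X$ (the local unobstructedness near a node being that of locally trivial deformations of a plane node on an affine) shows that $H^{1}(N'_X(-T))$ is an obstruction space for $H^{\prime T}_{X}$. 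Alternatively, one may deduce the obstruction statement by realising $H^{T}_{X}$ as the fibre over the trivial deformation of the restriction morphism from embedded deformations of $X$ to deformations of the $k$ points $T$ in $\mathbb{P}^r$ — a smooth target — and invoking the relative deformation machinery of \cite{S}.

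The step I expect to be the main obstacle is the obstruction computation, and specifically the verification that the obstruction cocycle of the constrained functor takes values in the twisted sheaf $N_X(-T)$ rather than merely in $N_X$. This comes down to the two local facts above — that the ``keep $T$ fixed'' deformation problem is unobstructed over an affine, and that any two local lifts of such a deformation differ by a section of $N_X$ vanishing along $T$ — both of which hold because $N_X$ is locally free near $T$ and $T$ is a Cartier divisor of smooth points, but they are where the bookkeeping needs to be done with care. Everything else is a routine adaptation of the deformation theory of the Hilbert scheme as developed in \cite{S} and \cite{Sernesi}.
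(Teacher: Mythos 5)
Your argument is correct, but it follows a genuinely different route from the paper's. The paper proves this lemma by a blow-up trick: letting $\epsilon:\widetilde{\mathbb{P}}^r \rightarrow \mathbb{P}^r$ be the blow-up at $T$ and $\widetilde{X}$ the strict transform of $X$, it identifies the constrained functors $H^{T}_{X}$ and $H^{\prime T}_{X}$ with the (unconstrained) functors of embedded, respectively locally trivial embedded, deformations of $\widetilde{X}$ in $\widetilde{\mathbb{P}}^r$, and then computes via a commutative exact diagram and the projection formula that $N_{\widetilde{X}/\widetilde{\mathbb{P}}^r} \cong N_X(-T)$ and $N'_{\widetilde{X}/\widetilde{\mathbb{P}}^r} \cong N'_X(-T)$; the tangent and obstruction spaces then come for free from the standard theory applied upstairs. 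You instead work directly downstairs: you identify first-order deformations keeping $T$ fixed with sections of $N_X$ (resp. $N'_X$) vanishing along $T$, which is exactly $H^0(N_X(-T))$ since $T$ is a Cartier divisor of smooth points and the sheaves are locally free there, and then you rerun the local-to-global \v{C}ech obstruction argument with the constraint built in. Your approach is more elementary and self-contained, at the cost of redoing the obstruction bookkeeping (which you handle correctly: local unobstructedness of the constrained problem on affines, and the fact that two constrained lifts differ by a section of $N_X(-T)\otimes J$, are precisely the points that need care); the paper's approach outsources all of that to the standard theory, at the cost of justifying the identification of the constrained functor with the deformation functor of the strict transform, which it leaves implicit. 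Your closing ``alternative'' via the fibre of the restriction to deformations of $T$ is stated too loosely to stand on its own, but it is not needed, as your main \v{C}ech argument is complete.
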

\begin{proof}
Let $\epsilon:\widetilde{\mathbb{P}}^r \rightarrow \mathbb{P}^r$ be the blow-up at $T$, let $E$ be the exceptional divisor and let $\widetilde{X}$ be the strict transform of $X$ with respect to $\epsilon$.
Consider the commutative exact diagram
\begin{equation}
\xymatrix{ & 0 \ar[d] & 0 \ar[d] & 0 \ar[d] &\\ 0 \ar[r] & T_{\widetilde{X}} \ar[r] \ar[d]^{\cong} & {T_{\widetilde{\mathbb{P}}^r}}_{|\widetilde{X}} \ar[r] \ar[d] & N'_{\widetilde{X}/\widetilde{\mathbb{P}}^r} \ar[r] \ar[d] & 0\\ 0 \ar[r]  & {\epsilon^{*}T_X}_{|\widetilde{X}} \ar[r] \ar[d] & {\epsilon^{*}T_{\mathbb{P}^r}}_{|\widetilde{X}} \ar[r] \ar[d] & {\epsilon^{*}N'_X}_{|\widetilde{X}} \ar[r] \ar[d]^{\gamma} & 0 \\ & 0 \ar[r] & \mathcal{O}_E(-E)_{|\widetilde{X}} \ar[r] \ar[d] & \mathcal{O}_E(-E)_{|\widetilde{X}} \ar[d] & \\ & & 0 & 0 &}
\end{equation}
One has that $N'_{\widetilde{X}/\widetilde{\mathbb{P}}^r} \cong \ker \gamma \cong {\epsilon^{*}N'_X}(-E)_{|\widetilde{X}} \cong N'_X(-T)$, where the last isomorphism holds by the projection formula. Analogous considerations give $N_{\widetilde{X}/\widetilde{\mathbb{P}}^r} \cong N_X(-T)$. The statement immediately follows.
\end{proof}
\begin{lem}
\label{sequences}
Let $C_1 \subset \mathbb{P}^r$ be a connected reduced nodal curve and $C_2 \subset \mathbb{P}^r$ be a smooth connected curve such that $C \doteq C_1 \cup C_2$ is nodal and $\Delta \doteq C_1 \cap C_2$ is a smooth $0$-dimensional subscheme of $C_1$ and $C_2$ supported at smooth points of $C_1$. Then there exist exact sequences:
\begin{equation}
\label{seq1}
0 \rightarrow \mathcal{I}_{C_1/C} \otimes N_{C} \rightarrow N'_C \rightarrow N'_{C_1} \rightarrow 0
\end{equation}
\begin{equation}
\label{seq2}
0 \rightarrow N_{C_2}(-\Delta) \rightarrow \mathcal{I}_{C_1/C} \otimes N_{C} \rightarrow {T^1_C}_{|\Delta} \rightarrow 0
\end{equation}
\begin{equation}
\label{seq3}
0 \rightarrow {T_{\mathbb{P}^r}}_{|C_2}(-\Delta) \rightarrow {T_{\mathbb{P}^r}}_{|C} \rightarrow {T_{\mathbb{P}^r}}_{|C_1} \rightarrow 0.
\end{equation}
\end{lem}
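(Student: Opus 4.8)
The plan is to obtain all three sequences from the structure sequence of the closed subcurve $C_1\subset C$, reduced to a local analysis at the nodes $p\in\Delta$. Since $C=C_1\cup C_2$ with $\Delta=C_1\cap C_2$ reduced and transverse at smooth points of both components, a regular function on an open set of $C$ vanishing along $C_1$ is the same thing as a regular function on $C_2$ vanishing along $\Delta$; hence $\mathcal{I}_{C_1/C}\cong\mathcal{O}_{C_2}(-\Delta)$ (pushed forward along $C_2\hookrightarrow C$), and symmetrically $\mathcal{I}_{C_2/C}\cong\mathcal{O}_{C_1}(-\Delta)$, so that we have the structure sequence
\[
0\longrightarrow\mathcal{O}_{C_2}(-\Delta)\longrightarrow\mathcal{O}_C\longrightarrow\mathcal{O}_{C_1}\longrightarrow 0.\qquad(\star)
\]
Moreover, $C$ being nodal and hence a local complete intersection, both $N_C$ and ${T_{\mathbb{P}^r}}_{|C}$ are locally free. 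Tensoring $(\star)$ by ${T_{\mathbb{P}^r}}_{|C}$ and using the projection formula to identify $\mathcal{O}_{C_2}(-\Delta)\otimes{T_{\mathbb{P}^r}}_{|C}\cong{T_{\mathbb{P}^r}}_{|C_2}(-\Delta)$ yields (\ref{seq3}) at once.

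For (\ref{seq1}), tensoring $(\star)$ by $N_C$ produces the exact sequence $0\to\mathcal{I}_{C_1/C}\otimes N_C\to N_C\to {N_C}_{|C_1}\to 0$, with $\mathcal{I}_{C_1/C}\otimes N_C\cong{N_C}_{|C_2}(-\Delta)$ by the projection formula. One first checks that $\mathcal{I}_{C_1/C}\otimes N_C$, seen as a subsheaf of $N_C$, is contained in $N'_C=\ker(N_C\to T^1_C)$: away from $\Delta$ this subsheaf vanishes along $C_1$ and equals $N_C=N'_C$ along $C_2$, while at a node $p\in\Delta$ it follows from a local computation, the deformation-theoretic content being that first-order deformations of $C$ supported on $C_2$ keep $C_1$, hence the node $p$, fixed and are therefore locally trivial there. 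Restricting the above sequence to $N'_C$ then gives $0\to\mathcal{I}_{C_1/C}\otimes N_C\to N'_C\to\overline{N'_C}\to 0$, where $\overline{N'_C}$ is the image of $N'_C$ in ${N_C}_{|C_1}$, and it remains to identify $\overline{N'_C}$ with $N'_{C_1}$. On $C_1\setminus\Delta$ this is immediate, since there $C$ agrees with $C_1$ (nodes of $C_1$ included), so $N'_C=N'_{C_1}$ and ${N_C}_{|C_1}=N_{C_1}$. At $p\in\Delta$ one passes to analytic-local coordinates in which $C_1$ and $C_2$ are two transverse coordinate lines: there $N_C$, $N'_C$ and $T^1_C$ admit explicit presentations, $C_1$ is smooth so $N'_{C_1}=N_{C_1}$ near $p$, the natural injective map $N_{C_1}\to {N_C}_{|C_1}$ has cokernel of length one at $p$ concentrated in the direction normal to $C_2$, and one verifies that the image of $N'_C$ in ${N_C}_{|C_1}$ equals the image of $N'_{C_1}$ — the defect of ${N_C}_{|C_1}$ over $N_{C_1}$ in the $C_2$-normal direction cancelling exactly the defect of $N'_C$ over $N_C$ caused by the node.

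For (\ref{seq2}), the map $N_{C_2}(-\Delta)\to\mathcal{I}_{C_1/C}\otimes N_C\cong{N_C}_{|C_2}(-\Delta)$ is the $(-\Delta)$-twist of the natural map $N_{C_2}\to {N_C}_{|C_2}$ obtained by dualizing the restriction to $C_2$ of the conormal sequence of $C_2\subset C\subset\mathbb{P}^r$ (legitimate since $N_C$ is locally free). Because $C_2$ is smooth and meets $C_1$ transversally along the reduced scheme $\Delta$, one has $\mathcal{I}_{C_2/C}/\mathcal{I}_{C_2/C}^2\cong\mathcal{O}_\Delta$, whence $N_{C_2/C}=0$ and the map is injective; its cokernel is a skyscraper of length one at each point of $\Delta$, through which the restriction ${N_C}_{|C_2}\to{T^1_C}_{|\Delta}$ of $N_C\to T^1_C$ factors, inducing an isomorphism (both sides are one-dimensional at each $p\in\Delta$, spanned by the smoothing direction of the node). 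Twisting the resulting sequence $0\to N_{C_2}\to{N_C}_{|C_2}\to{T^1_C}_{|\Delta}\to 0$ by $\mathcal{O}_{C_2}(-\Delta)$, which leaves the skyscraper quotient unchanged, gives (\ref{seq2}).

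Apart from the formal manipulations of $(\star)$ together with the projection formula, the step I expect to be the main obstacle is the local bookkeeping at the nodes $p\in\Delta$ needed to pin the cokernels down exactly — in particular, to see that the cokernel in (\ref{seq1}) is $N'_{C_1}$ and not merely $N_{C_1}$ or a twist of it, and likewise that the cokernel in (\ref{seq2}) is exactly ${T^1_C}_{|\Delta}$. Both reduce to the explicit normal form of a node, already implicit in the proofs of Lemma \ref{nc-t} and Proposition \ref{vgddelta}.
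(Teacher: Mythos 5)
Your proof is correct and follows essentially the same route as the paper: sequence (\ref{seq3}) is obtained identically (tensoring the structure sequence of $C_1\subset C$ by the locally free ${T_{\mathbb{P}^r}}_{|C}$), while for (\ref{seq1}) and (\ref{seq2}) the paper simply invokes \cite{Sernesi}, Lemma 5.1, (i) and (ii), and your local analysis at the nodes of $\Delta$ is precisely the ``straightforward generalization'' of that lemma which the authors leave to the reader.
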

\begin{proof}
Sequences (\ref{seq1}) and (\ref{seq2}) are straightforward generalizations of \cite{Sernesi}, Lemma 5.1, $(i)$ and $(ii)$. Sequence (\ref{seq3}) is obtained by tensorizing the exact sequence $0 \rightarrow \mathcal{O}_{C_2}(-\Delta) \rightarrow \mathcal{O}_C \rightarrow \mathcal{O}_{C_1} \rightarrow 0$ by the locally free sheaf ${T_{\mathbb{P}^r}}_{|C}$.
\end{proof}
Let $R \subset \mathbb{P}^r$ be a smooth rational curve of degree $d$. We say that $R$ is general if it is general as a point of the scheme parameterizing smooth rational curves of degree $d$ in $\mathbb{P}^r$.
\begin{lem}
\label{normalitangentiristretti}
Let $c$ be a positive integer and let $R,R_c \subset \mathbb{P}^r$ be, respectively, a rational normal curve and a smooth general rational curve of degree $r+c$. Then
$${T_{\mathbb{P}^r}}_{|R_c} \cong \mathcal{O}_{\mathbb{P}^1}(r+c+1)^{\oplus h} \oplus \mathcal{O}_{\mathbb{P}^1}(r+c+2)^{\oplus r-h} \hbox{ for some } h \leq r$$
$${T_{\mathbb{P}^r}}_{|R} \cong \mathcal{O}_{\mathbb{P}^1}(r+1)^{\oplus r}$$
$$N_{R} \cong \mathcal{O}_{\mathbb{P}^1}(r+2)^{\oplus r-1}$$
$$N_{R_c} \cong \mathcal{O}_{\mathbb{P}^1}(r+2+c)^{\oplus h'} \oplus \mathcal{O}_{\mathbb{P}^1}(r+3+c)^{\oplus r-1-h'} \hbox{ for some } h' \leq r-1.$$
\end{lem}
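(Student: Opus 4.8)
The plan is to pull everything back to $\mathbb{P}^1$ through the Euler sequence. For the rational normal curve $R$ this produces the two middle splittings by a short direct computation, while for $R_c$ it reduces the first and fourth formulas to the classical fact that the restricted tangent bundle and the normal bundle of a general rational curve are as balanced as possible, the precise decompositions being then pinned down by rank and degree.

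First I treat $R$. Fix an isomorphism $\nu\colon\mathbb{P}^1\to R$ with $\nu^{*}\mathcal{O}_{\mathbb{P}^r}(1)\cong\mathcal{O}_{\mathbb{P}^1}(r)$; since $R$ is linearly normal, restriction of sections identifies $H^{0}(\mathcal{O}_{\mathbb{P}^r}(1))$ with $H^{0}(\mathcal{O}_{\mathbb{P}^1}(r))$. Restricting the dual Euler sequence $0\to\Omega_{\mathbb{P}^r}\to\mathcal{O}_{\mathbb{P}^r}(-1)^{\oplus(r+1)}\to\mathcal{O}_{\mathbb{P}^r}\to 0$ to $R$ and using this identification, its last arrow becomes the twist by $\mathcal{O}_{\mathbb{P}^1}(-r)$ of the evaluation map $\mathrm{ev}\colon H^{0}(\mathcal{O}_{\mathbb{P}^1}(r))\otimes\mathcal{O}_{\mathbb{P}^1}\to\mathcal{O}_{\mathbb{P}^1}(r)$, so that ${\Omega_{\mathbb{P}^r}}_{|R}\cong(\ker\mathrm{ev})\otimes\mathcal{O}_{\mathbb{P}^1}(-r)$. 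Now $H^{0}(\mathrm{ev})$ is an isomorphism, so $\ker\mathrm{ev}$ has no global sections; being of rank $r$ and degree $-r$ on $\mathbb{P}^1$, it must equal $\mathcal{O}_{\mathbb{P}^1}(-1)^{\oplus r}$. Hence ${\Omega_{\mathbb{P}^r}}_{|R}\cong\mathcal{O}_{\mathbb{P}^1}(-r-1)^{\oplus r}$, i.e.\ ${T_{\mathbb{P}^r}}_{|R}\cong\mathcal{O}_{\mathbb{P}^1}(r+1)^{\oplus r}$, which is the second formula. For the third one I dualize the sequence $0\to T_R\to{T_{\mathbb{P}^r}}_{|R}\to N_R\to 0$ into the conormal sequence $0\to N_{R}^{*}\to{\Omega_{\mathbb{P}^r}}_{|R}\to\Omega_R\to 0$ and twist by $\mathcal{O}_{\mathbb{P}^1}(r+1)$, obtaining $0\to N_{R}^{*}(r+1)\to\mathcal{O}_{\mathbb{P}^1}^{\oplus r}\to\mathcal{O}_{\mathbb{P}^1}(r-1)\to 0$ (here $\Omega_R\cong\mathcal{O}_{\mathbb{P}^1}(-2)$, so $\Omega_R(r+1)\cong\mathcal{O}_{\mathbb{P}^1}(r-1)$). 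In the affine chart in which $R$ is parametrized by $u\mapsto(u,u^{2},\dots,u^{r})$ the codifferential sends $dy_i$ to $i\,u^{i-1}\,du$, so after the twist the map $\mathcal{O}_{\mathbb{P}^1}^{\oplus r}\to\mathcal{O}_{\mathbb{P}^1}(r-1)$ is given by $r$ sections of $\mathcal{O}_{\mathbb{P}^1}(r-1)$ proportional to the monomials $1,u,\dots,u^{r-1}$; these are a basis of $H^{0}(\mathcal{O}_{\mathbb{P}^1}(r-1))$, whence that map induces an isomorphism on $H^{0}$ and $H^{0}(N_{R}^{*}(r+1))=0$. As $N_{R}^{*}(r+1)$ has rank $r-1$ and degree $-(r-1)$, it must equal $\mathcal{O}_{\mathbb{P}^1}(-1)^{\oplus(r-1)}$, so $N_R\cong\mathcal{O}_{\mathbb{P}^1}(r+2)^{\oplus(r-1)}$, which is the third formula.

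For $R_c$, restricting the Euler sequence shows that ${T_{\mathbb{P}^r}}_{|R_{c}}$ has rank $r$ and degree $(r+1)(r+c)$, and then the sequence $0\to T_{R_{c}}\to{T_{\mathbb{P}^r}}_{|R_{c}}\to N_{R_{c}}\to 0$ shows that $N_{R_{c}}$ has rank $r-1$ and degree $(r+1)(r+c)-2$. At this point I invoke the classical fact that, for a general rational curve in $\mathbb{P}^r$, both ${T_{\mathbb{P}^r}}_{|R_{c}}$ and $N_{R_{c}}$ are balanced. Since the summand degrees of a balanced bundle on $\mathbb{P}^1$ take at most two (consecutive) values, determined together with their multiplicities by rank and degree, this forces ${T_{\mathbb{P}^r}}_{|R_{c}}\cong\mathcal{O}_{\mathbb{P}^1}(r+c+1)^{\oplus h}\oplus\mathcal{O}_{\mathbb{P}^1}(r+c+2)^{\oplus(r-h)}$ and $N_{R_{c}}\cong\mathcal{O}_{\mathbb{P}^1}(r+c+2)^{\oplus h'}\oplus\mathcal{O}_{\mathbb{P}^1}(r+c+3)^{\oplus(r-1-h')}$ for suitable $h\le r$ and $h'\le r-1$. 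If one prefers to keep the balancedness self-contained, I would specialize $R_c$ flatly to the connected nodal tree $Y=R\cup L_{1}\cup\dots\cup L_{c}$, where $R$ is a rational normal curve and $L_{1},\dots,L_{c}$ are general lines attached one at a time, compute $h^{0}$ and $h^{1}$ of the relevant twists of ${T_{\mathbb{P}^r}}_{|Y}$ by repeated use of the exact sequence (\ref{seq3}) --- together with the case of $R$ settled above and the well-known splitting ${T_{\mathbb{P}^r}}_{|L}\cong\mathcal{O}_{\mathbb{P}^1}(2)\oplus\mathcal{O}_{\mathbb{P}^1}(1)^{\oplus(r-1)}$ on a line --- and then transfer the resulting cohomology vanishings to the general member $R_c$ of the family by semicontinuity; an entirely analogous argument, now based on the exact sequences of Lemma \ref{sequences}, handles $N_{R_{c}}$.

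The step I expect to be the main obstacle is precisely this last one. The rational normal curve case is essentially a one-line computation, but controlling ${T_{\mathbb{P}^r}}_{|R_{c}}$ and $N_{R_{c}}$ for the general curve of higher degree requires either quoting the known balancedness results for general rational curves or running the degeneration argument, and in the latter case the cohomological bookkeeping along the chain $R\cup L_{1}\cup\dots\cup L_{c}$ --- keeping track, at each step, of the precise twist for which $h^{1}$ vanishes and $h^{0}$ is as small as possible --- is where the real work lies.
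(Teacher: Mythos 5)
Your Euler-sequence computation of ${T_{\mathbb{P}^r}}_{|R}$ and the conormal-sequence computation of $N_R$ are correct, and they make the rational-normal-curve half of the statement self-contained, whereas the paper's own proof is a bare citation (Ramella's theorem for the restricted tangent bundles, Ran's theorem and Ghione--Sacchiero for the normal bundles). For $R_c$ your first option is in substance the same as the paper's, namely quoting generic splitting results; your degeneration alternative is plausible for ${T_{\mathbb{P}^r}}_{|R_c}$ but, for $N_{R_c}$, note that the normal bundle of $R\cup L_1\cup\dots\cup L_c$ restricted to a component is not the normal bundle of that component but a positive elementary modification of it at the nodes (this is exactly the content of the sequences from \cite{HH}, Corollary 3.2, invoked in the proof of Lemma \ref{R_cr+3+c}), so the ``entirely analogous argument'' is not a routine repetition and the cohomological bookkeeping you defer is genuinely the hard part.

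The concrete gap is the claim that balancedness, together with rank and degree, \emph{forces} the displayed decompositions. A bundle of rank $r-1$ and degree $\deg N_{R_c}=(r+1)(r+c)-2$ of the form $\mathcal{O}(r+c+2)^{\oplus h'}\oplus\mathcal{O}(r+c+3)^{\oplus (r-1-h')}$ must have $h'=r-1-2c$, which is negative as soon as $2c>r-1$; in that range the balanced bundle of this rank and degree has all summands of degree at least $r+c+3$, so balancedness \emph{contradicts} rather than implies the stated formula. For instance, for $r=3$, $c=2$ the general rational quintic in $\mathbb{P}^3$ has $N\cong\mathcal{O}(9)^{\oplus 2}$ by \cite{Ghione}, not $\mathcal{O}(7)^{\oplus h'}\oplus\mathcal{O}(8)^{\oplus (2-h')}$. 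Likewise $h=r-c$ for the tangent bundle, which is negative for $c>r$. (This is really an imprecision in the Lemma as quoted from \cite{Ran}, but your argument inherits it; moreover, balancedness of the normal bundle of a \emph{general} rational curve is not a classical fact one can simply invoke --- it is a delicate recent result and is known to fail for some $(r,d)$.) What the rest of the paper actually uses (Lemma \ref{nxi-deltai} and Proposition \ref{zgoodproperties}) is only that every summand of $N_{R_c}$ has degree at least $r+c+2$ --- true for any smooth nondegenerate rational curve of degree $r+c$ and sufficient for $h^1(N_{R_c}(-\Delta_{R_c}))=0$ --- and that every summand of ${T_{\mathbb{P}^r}}_{|R_c}$ has degree at most $r+c+2$, which balancedness does give in the range $c\le r$ used in Theorem \ref{main}. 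Those weaker bounds, not the literal splitting types, are what your argument should target.
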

\begin{proof}
The first two isomorphisms follow from \cite{Ramella}, Theorem 2. The last two follow from \cite{Ran}, Theorem 6.1 (see also \cite{Ghione} for the case $r=3$).
\end{proof}
\end{section}
\begin{section}{The main result}
Let $\mathbb{G}$ be the (irreducible) family of nodal, irreducible, nondegenerate curves $\Gamma \subset \mathbb{P}^r$ with $r+1$ nodes and such that $\deg \Gamma=2r$, $p_a(\Gamma)=r+1$ and $\omega_{\Gamma}=\mathcal{O}_{\Gamma}(1)$.
\begin{lem}
\label{gamma}
Let $C_1,C_2 \subset \mathbb{P}^r$ be distinct rational normal curves intersecting at $r+2$ points. Then $C \doteq C_1 \cup C_2$ deforms to a curve $\Gamma \in \mathbb{G}$ such that $h^1(N'_{\Gamma})=0$ and $h^{0}({T_{\mathbb{P}^r}}_{|\Gamma})=(r+1)^2-1$.
\end{lem}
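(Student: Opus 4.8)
The plan is to realise $\Gamma$ as a one‑node partial smoothing of $C=C_1\cup C_2$ and then to read off all the required properties; the only genuinely delicate point will be the vanishing $H^1(N'_\Gamma)=0$.

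First I would check $H^1(N'_C)=0$. Since $\Delta:=C_1\cap C_2$ has length $r+2$, the curve $C$ is nodal with $r+2$ nodes, $p_a(C)=r+1$ and $\deg C=2r$. I would apply Lemma~\ref{sequences} with $C_1$ smooth: as $C_1$ is a rational normal curve, Lemma~\ref{normalitangentiristretti} gives $N'_{C_1}=N_{C_1}\cong\mathcal{O}_{\mathbb{P}^1}(r+2)^{\oplus(r-1)}$, so $H^1(N'_{C_1})=0$; likewise $N_{C_2}\cong\mathcal{O}_{\mathbb{P}^1}(r+2)^{\oplus(r-1)}$, and since $\Delta$ has degree $r+2$ on $C_2\cong\mathbb{P}^1$ one gets $N_{C_2}(-\Delta)\cong\mathcal{O}_{\mathbb{P}^1}^{\oplus(r-1)}$, hence $H^1(N_{C_2}(-\Delta))=0$. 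As ${T^1_C}_{|\Delta}$ is a skyscraper sheaf, sequence~(\ref{seq2}) gives $H^1(\mathcal{I}_{C_1/C}\otimes N_C)=0$, and then sequence~(\ref{seq1}) gives $H^1(N'_C)=0$. From $0\to N'_C\to N_C\to T^1_C\to 0$ I also record $H^1(N_C)=0$ and the surjectivity of $H^0(N_C)\to H^0(T^1_C)$.

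Next, by Proposition~\ref{vgddelta}(iii) the space $\mathscr{V}^r_{r+1,2r,r+2}$ is smooth at $[C]$, and choosing $\delta'=r+1$ (keeping $r+1$ of the nodes of $C$ locally trivial and smoothing the last one) produces a deformation $\Gamma\subset\mathbb{P}^r$ that is nodal with exactly $r+1$ nodes. The smoothed node lies in $C_1\cap C_2$, so smoothing it merges the two components: $\Gamma$ is irreducible, and a general such deformation is nondegenerate; by flatness $p_a(\Gamma)=r+1$ and $\deg\Gamma=2r$, so $\Gamma$ has geometric genus $0$. To see $\Gamma\in\mathbb{G}$ it remains to check $\omega_\Gamma\cong\mathcal{O}_\Gamma(1)$: both are line bundles of the same degree $2r=2p_a(\Gamma)-2$ on the integral curve $\Gamma$, and by Serre duality $h^1(\mathcal{O}_\Gamma(1))=h^0(\omega_\Gamma\otimes\mathcal{O}_\Gamma(-1))\le 1$, with equality if and only if $\omega_\Gamma\otimes\mathcal{O}_\Gamma(-1)\cong\mathcal{O}_\Gamma$ (a degree‑$0$ line bundle on an integral curve admitting a nonzero section is trivial). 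Hence $h^0(\mathcal{O}_\Gamma(1))=r+h^1(\mathcal{O}_\Gamma(1))\le r+1$; nondegeneracy forces equality, so $\omega_\Gamma\cong\mathcal{O}_\Gamma(1)$ and $\Gamma\in\mathbb{G}$. This identity also makes $\mu_0(\mathcal{O}_\Gamma(1))$ an isomorphism, since $H^0(\omega_\Gamma(-\mathcal{O}_\Gamma(1)))=H^0(\mathcal{O}_\Gamma)$ is one‑dimensional; thus Proposition~\ref{condizioniequivalentialinearmentenormaleerangomassimo} yields $h^0({T_{\mathbb{P}^r}}_{|\Gamma})=(r+1)^2-1$.

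The hard part will be $H^1(N'_\Gamma)=0$, because the equisingular normal sheaf is not semicontinuous in the obvious way when the number of nodes drops. Since $\Gamma$ is an l.c.i. curve near $[C]$ in the Hilbert scheme, $H^1(N_C)=0$ already forces $H^1(N_\Gamma)=0$, so it suffices to prove $H^0(N_\Gamma)\to H^0(T^1_\Gamma)$ surjective, i.e. that the $r+1$ nodes of $\Gamma$ can be smoothed independently. I would deduce this from the central fibre: writing $S'$ for the $r+1$ kept nodes of $C$, $\{p\}$ for the smoothed one, and $N''_C:=\ker(N_C\to\bigoplus_{q\in S'}T^1_q)$, the exact sequence $0\to N'_C\to N''_C\to T^1_p\to 0$ gives $H^1(N''_C)=0$; the deformations of $C$ that are locally trivial at $S'$ are then unobstructed and a family among them realises $C\rightsquigarrow\Gamma$, over which the relative equisingular normal sheaf (equisingular along the $r+1$ sections of kept nodes) has central fibre $N''_C$ and general fibre $N'_\Gamma$, whence $h^1(N'_\Gamma)\le h^1(N''_C)=0$ by semicontinuity. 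Equivalently, the surjectivity of $H^0(N_C)\to H^0(T^1_C)$ makes the map from a neighbourhood of $[C]$ to the product of the versal deformations of the $r+2$ nodes a submersion, hence still submersive onto the $S'$‑factors near the nearby point $[\Gamma]$, which is precisely the surjectivity of $H^0(N_\Gamma)\to H^0(T^1_\Gamma)$. Once $H^1(N'_\Gamma)=0$ is in hand, $\Gamma$ has all the asserted properties.
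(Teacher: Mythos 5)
Your proposal is correct and shares the paper's overall strategy (establish the vanishings on the reducible curve $C=C_1\cup C_2$, then pass to a one-node partial smoothing $\Gamma$), and your computation of $h^1(N'_C)=0$ via sequences (\ref{seq1}), (\ref{seq2}) and Lemma \ref{normalitangentiristretti} is exactly the paper's. You diverge in two places. For $h^0({T_{\mathbb{P}^r}}_{|\Gamma})=(r+1)^2-1$ the paper works on the central fibre: it applies the Mayer--Vietoris sequence $0 \rightarrow {T_{\mathbb{P}^r}}_{|C} \rightarrow {T_{\mathbb{P}^r}}_{|C_1} \oplus {T_{\mathbb{P}^r}}_{|C_2} \rightarrow {T_{\mathbb{P}^r}}_{|\Delta} \rightarrow 0$, uses that the $r+2$ points of $\Delta$ are in linearly general position to get surjectivity on global sections, and then semicontinuity; you instead compute directly on $\Gamma$, deducing linear normality and the surjectivity of $\mu_0(\mathcal{O}_\Gamma(1))$ from $\omega_\Gamma\cong\mathcal{O}_\Gamma(1)$ and invoking Proposition \ref{condizioniequivalentialinearmentenormaleerangomassimo} -- this is a clean alternative and it also supplies the verification that $\Gamma\in\mathbb{G}$, which the paper dispatches with the single word ``adjunction.'' For $h^1(N'_\Gamma)=0$ the paper simply cites ``upper semicontinuity of the cohomology,'' whereas you correctly observe that $N'$ is not the fibre of a flat family at $[C]$ when a node is smoothed, and you repair this with the intermediate sheaf $N''_C=\ker(N_C\to\bigoplus_{q\in S'}T^1_q)$; your version is the rigorous form of what the paper asserts. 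The one point you gloss over that the paper takes care to prove is that $C_1$ and $C_2$ meet quasi-transversally at exactly $r+2$ points (so that $C$ really is nodal with $r+2$ nodes): the paper gets this from the Eisenbud--Harris bound that at most one rational normal curve passes through $r+3$ points counted with multiplicity. You should add that one line, since your whole setup ($p_a(C)=r+1$, the sequences of Lemma \ref{sequences}, stability) presupposes nodality.
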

\begin{proof}
$C_1$ and $C_2$ intersect quasi-transversally at exactly $r+2$ points. Indeed there is at most one rational normal curve passing through $r+3$ points, counted with multiplicities (see \cite{EH}, Theorem 1), hence if $C_1$ and $C_2$ intersect each other in more than $r+2$ points, counted with multiplicities, then $C_1 =C_2$, contradiction.\\
Adjunction gives $\omega_C =\mathcal{O}_C(1)$, thus
in order to show that $C$ deforms to a curve $\Gamma \in \mathbb{G}$ it is sufficient to show that $h^1(N'_C)=0$. Let $\Delta \doteq C_1 \cap C_2$. By Lemma \ref{normalitangentiristretti} one has $h^{1}(N_{C_1})=0$ and $h^{1}(N_{C_2}(-\Delta))=0$. The cohomology sequences associated to (\ref{seq1}) and (\ref{seq2}) then give $h^1(N'_C)=0$.\\
Consider the Mayer-Vietoris sequence
$$0 \rightarrow {T_{\mathbb{P}^r}}_{|C} \rightarrow {T_{\mathbb{P}^r}}_{|C_1} \oplus {T_{\mathbb{P}^r}}_{|C_2} \xrightarrow{\alpha} {T_{\mathbb{P}^r}}_{|\Delta} \rightarrow 0.$$
Since the points of $\Delta$ are in linearly general position, the map $\alpha$ induces a surjective restriction map on global sections, thus $h^0({T_{\mathbb{P}^r}}_{|C})=h^{0}({T_{\mathbb{P}^r}}_{|C_i})=(r+1)^2-1$.\\
The statement then immediately follows from the upper semicontinuity of the cohomology.
\end{proof}
\begin{oss}
\label{oss1}
Note that, by construction, the curve $\Gamma$ can always be assumed to contain $r+3$ general points in $\mathbb{P}^r$.
\end{oss}
\begin{lem}
\label{r+3punti}
Let $\Gamma$ be as in Lemma \ref{gamma} and let $m$ be any positive integer. Then, for all positive integers $k \leq r+3$, there exist $m$ pairwise disjoint rational normal curves, each intersecting $\Gamma$ quasi-transversally at (exactly) $k$ general points of $\Gamma$.
\end{lem}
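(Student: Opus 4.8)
The plan is to produce the $m$ rational normal curves one at a time, using a dimension count (a parameter/incidence argument) to guarantee that for a general choice of $k$ points on $\Gamma$ there passes at least one rational normal curve meeting $\Gamma$ transversally exactly at those $k$ points, and then to check that finitely many such curves can be chosen pairwise disjoint. First I would recall the basic facts about rational normal curves in $\mathbb{P}^r$: they form an irreducible family of dimension $(r+3)(r-1)$, and through any $r+3$ points in linearly general position there passes a unique rational normal curve (this is exactly \cite{EH}, Theorem 1, already invoked in Lemma \ref{gamma}). Since $k \leq r+3$, fixing $k$ general points of $\Gamma$ imposes $k$ independent point conditions on the family of rational normal curves; the remaining $(r+3)(r-1)-k$-dimensional subfamily of rational normal curves through those $k$ points is nonempty, and a general member $R$ of it is irreducible, smooth and nondegenerate.

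The key step is to show that for a \emph{general} choice of the $k$ points $q_1,\dots,q_k$ on $\Gamma$, a general rational normal curve $R$ through them meets $\Gamma$ \emph{only} at those $k$ points, and does so quasi-transversally. I would argue this by a standard incidence-variety / upper-semicontinuity argument: consider the incidence variety $\mathcal{I}$ of pairs $(R, (q_1,\dots,q_k))$ with $q_i \in R \cap \Gamma$; projecting to the symmetric product of $\Gamma$ and using the fibre dimension computed above, $\mathcal{I}$ is irreducible of the expected dimension, and it dominates the family of rational normal curves. It then suffices to exhibit a \emph{single} rational normal curve meeting $\Gamma$ quasi-transversally in exactly $k$ points (for then the general $R$ in the dominating family has this property, being an open condition), and here Remark \ref{oss1} together with the construction in Lemma \ref{gamma} is exactly what is needed: $\Gamma$ contains $r+3$ general points of $\mathbb{P}^r$, so picking $k$ of them and the unique rational normal curve $R_0$ through $k$ suitably chosen general points of $\mathbb{P}^r$ (including our $k$ chosen ones, completed to a general $(r+3)$-tuple not all on $\Gamma$) gives a rational normal curve meeting $\Gamma$ in at least those $k$ points; genericity of the configuration forces $R_0 \cap \Gamma$ to consist of exactly $k$ reduced points, since any extra intersection point, or any tangency, would be a nontrivial closed condition violated by the generic choice. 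Transversality at the $k$ marked points follows because both $R_0$ and $\Gamma$ are smooth there and their tangent directions are unconstrained.

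Finally, to get $m$ \emph{pairwise disjoint} such curves I would proceed inductively: having chosen $R_1,\dots,R_{j}$, the locus of rational normal curves through $k$ general points of $\Gamma$ that meet the fixed curve $R_1 \cup \cdots \cup R_j$ is a proper closed subset of the $((r+3)(r-1)-k)$-dimensional family (since that family is positive-dimensional — here we use $k \leq r+3 \leq (r+3)(r-1)$ for $r \geq 2$ — and not every member can pass through a fixed finite union of curves), so a general member $R_{j+1}$ avoids $R_1 \cup \cdots \cup R_j$ while still meeting $\Gamma$ quasi-transversally in exactly $k$ points; we may also re-choose the $k$ points on $\Gamma$ at each step, or keep them fixed, as convenient. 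Iterating $m$ times yields the desired collection. The main obstacle is the middle step: making rigorous that the general rational normal curve through $k$ general points of $\Gamma$ does not acquire unwanted intersections with $\Gamma$ — this is where one must carefully set up the incidence correspondence and invoke Remark \ref{oss1} to produce the required transverse model, rather than the bookkeeping in the first and last steps, which is routine.
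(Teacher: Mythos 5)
Your overall strategy --- exhibit one rational normal curve meeting $\Gamma$ quasi-transversally in exactly $k$ points, spread the property over an irreducible dominating family by openness, then get disjointness by an inductive dimension count --- has the same general shape as the paper's argument. But two things go wrong. The minor one is bookkeeping: passing through a point of $\mathbb{P}^r$ is a codimension $r-1$ condition on the $(r+3)(r-1)$-dimensional family of rational normal curves, not codimension $1$, so the subfamily through $k$ general points has dimension $(r-1)(r+3-k)$ (this is the quantity $h^{0}(N_Y(-A))$ that the paper uses), not $(r+3)(r-1)-k$. This is repairable, but it feeds into the next point.

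The decisive gap is the case $k=r+3$, which is exactly where you yourself locate the difficulty and which is the case needed downstream (the curves $Y_j$ and the construction of $R_c$ all rest on $(r+3)$-point intersections). For $k=r+3$ the family of rational normal curves through the $k$ chosen points of $\Gamma$ is a \emph{single} curve, so there is no positive-dimensional family in which to take a ``general member,'' and your claim that any extra intersection or tangency ``would be a nontrivial closed condition violated by the generic choice'' is precisely what must be proved: to know the closed condition is proper you must exhibit one configuration of $r+3$ points of $\Gamma$ whose unique rational normal curve meets $\Gamma$ nowhere else and transversally. A dimension count cannot settle this, since the locus of rational normal curves with an $(r+4)$-th intersection point maps finitely onto a subset of the locus with $r+3$ intersection points, so their dimensions are not a priori comparable. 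The paper manufactures the required witness by degenerating $\Gamma$ back to $C_1\cup C_2$ and taking the unique rational normal curve $D_{A,Q}$ through $r+1$ of the nodes, a general $P\in C_1$ and a general $Q\in C_2$: uniqueness of the rational normal curve through $r+3$ points forces $D_{A,Q}$ to meet each $C_i$ in at most $r+2$ points, hence to meet $C_1\cup C_2$ in exactly $r+3$ points, and one then deforms. Nothing in your proposal replaces this step, and the same issue breaks your disjointness induction at $k=r+3$ (``since that family is positive-dimensional'' fails there; the paper handles that case separately too). For $k<r+3$ your route of choosing the $k$ points directly can be made to work once the dimensions are corrected --- and it is genuinely different from the paper's, which instead deforms the $(r+3)$-point curve keeping a length-$k$ subscheme fixed via Lemma \ref{nc-t} and the strict drop of $h^{0}(N_R(-T_k))$ --- but only after the $k=r+3$ witness is in hand.
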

\begin{proof}
Let $C_1,C_2 \subset \mathbb{P}^r$ be as in Lemma \ref{gamma}. Let $S \doteq C_1 \cap C_2$, let $O \in S$, $P$ a general point in $C_1$ and $Q$ a general point in $C_2$. Set $A \doteq (S \smallsetminus \left\{O\right\}) \cup P$. Let $D_{A,Q}$ be the only rational normal curve containing $A \cup Q$. Then $C_1 \cap C_2 \cap D_{A,Q} = S \smallsetminus \left\{O\right\}$.\\
If $D_{A,Q}$ intersects $C_1$ in $r+3$ points, counted with multiplicities, then $D_{A,Q}=C_1$, which is a contradiction because $Q \notin C_1$, hence $D_{A,Q}$ intersects $C_1$ quasi-transversally at (exactly) $r+2$ points. The same argument with $P$ instead of $Q$ gives that $D_{A,Q}$ intersects $C_2$ quasi-transversally at (exactly) $r+2$ points. By deforming $C_1 \cup C_2$ to $\Gamma$, one obtains that there exists a rational normal curve $R$ intersecting $\Gamma$ quasi-transversally at (exactly) $r+3$ general points.\\
Let $T=\Gamma \cap R$ and let $T_{k}$ be any subscheme of $T$ of length $k$. Since $h^{1}(N_R(-T))=0$, one has that $h^{0}(N_R(-T_{k-1})) > h^{0}(N_R(-{T_k}))$ for all $2 \leq k \leq r+3$. By Lemma \ref{nc-t} it follows that, for all positive integers $k < r+3$, there exists a rational normal curve intersecting $\Gamma$ quasi-transversally at (exactly) $k$ general points.
We now have to prove that the curves can be assumed to be pairwise disjoint. For the case $k=r+3$, fix two rational normal curves $D$ and $D'$, both intersecting $\Gamma$ quasi-transversally at $r+3$ points, and such that $\Gamma \cap D \cap D'$ is a 0-dimensional scheme $W$ of $r+2$ points in linearly general position. Since $D$ and $D'$ are distinct, one has $D \cap D'=W$. The statement follows by deforming $D'$ to a $D''$ such that $\Gamma \cap D \cap D''=\emptyset$. Now assume $k<r+3$.
If $m=1$, then there is nothing to prove. Assume $m\ge 2$ and to have proven the existence of rational normal curves $Y_1,\dots ,Y_{m-1}$
such that $Y_i\cap \Gamma$ has length $k$, each $Y_i$ intersects quasi-transversally $\Gamma$ and $Y_i\cap Y_j =\emptyset$ for all $i\ne j$.
For each 0-dimensional subscheme $A\subset \Gamma$ of length $k$, let $E(A)$ be the set of all rational normal curves $Y\subset \mathbb {P}^r$ such
that $Y\cap \Gamma =A$ and $Y$ intersects quasi-transversally $\Gamma$. For a general $A$, the set $E(A)$
is non-empty and of dimension $h^{0}(N_Y(-A))=(r-1)(r+3-k)$. Fix a general $A\subset \Gamma$ of length $k$. In particular we assume
$A\cap (Y_i\cap \Gamma )=\emptyset$ for all $i=1,\dots ,m-1$. For
any $O\in Y_1\cup \cdots \cup Y_{m-1}$ set $E(A,O):= \{Y\in E(A):O\in Y\}$. If $\{O\}\cup A$ is not in linearly general position,
then $E(A,O)=\emptyset$. If $\{O\}\cup A$ is in linearly general position, then $E(A,O)$ has dimension
$(r-1)(r+2-k) =\dim (E(A)) -r+1$. Since $\dim (Y_1\cup \cdots \cup Y_{m-1}) = 1$, we get
the existence of $Y_m\in E(A)$ such that $Y_m\cap Y_i =\emptyset$ for all $i<m$.
\end{proof}
\begin{lem}
\label{R_cr+3+c}
Let $\Gamma$ be as in Lemma \ref{gamma} passing through $r+3$ general points $p_1,...,p_{r+3}$ in $\mathbb{P}^r$, let $c \leq r+3$ be a non-negative integer, let $r_1,...,r_c$ be general points in $\mathbb{P}^r$ and let $E_1,...,E_{c^{\prime}}$, $c^{\prime} \geq c$, be rational normal curves such that $E_i \cap E_j = \emptyset$ for all $i \neq j$ and, for $h=1,...,c$, one has $r_h \in E_h$. Let $E'=\Gamma \cup E_1 \cup \dots \cup E_{c^{\prime}} \subset \mathbb{P}^r$. Then there exists a smooth rational curve $R_c \subset \mathbb{P}^r$ of degree $r+c$ intersecting $E'$ quasi-transversally at (exactly) $p_1,...,p_{r+3},r_1,...,r_c$. Moreover, $R_c$ can be assumed to be a general smooth rational curve of degree $r+c$ through $p_1,...,p_{r+3},r_1,...,r_c$.
\end{lem}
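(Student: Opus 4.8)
The plan is to realize $R_c$ as a general deformation, keeping $r+3+c$ points fixed, of a reducible curve $Y := R_0 \cup L_1 \cup \cdots \cup L_c$ with $R_0$ a rational normal curve and $L_1,\dots ,L_c$ lines. First I would fix $R_0$: applying Lemma~\ref{r+3punti} with $k=r+3$ (and using Remark~\ref{oss1}), I take $R_0$ to be a rational normal curve meeting $\Gamma$ quasi-transversally at exactly $p_1,\dots ,p_{r+3}$ and, by generality of the remaining data, disjoint from every $E_j$. Then, for $h=1,\dots ,c$, I pick a general point $q_h\in R_0$ and let $L_h$ be the line through $q_h$ and $r_h$; letting $q_h$ vary over the one-dimensional curve $R_0$ and discarding the finitely many bad positions (those making $L_h$ a secant of $R_0$, or making $L_h$ meet $\Gamma$, or $E_j$ for $j\ne h$, or $E_h$ away from $r_h$, or some $L_{h'}$) I obtain pairwise disjoint lines, each meeting $R_0$ quasi-transversally only at $q_h$ and $E_h$ quasi-transversally only at $r_h$, and disjoint from $\Gamma$ and from the remaining $E_j$. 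Thus $Y$ is a connected nodal nondegenerate curve of degree $r+c$ and arithmetic genus $0$, with nodes exactly $q_1,\dots ,q_c$, and $Y\cap E' = \{p_1,\dots ,p_{r+3},r_1,\dots ,r_c\}=:T$ with every intersection quasi-transverse.

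The technical heart is the vanishing $h^1(N'_Y(-T))=0$. Setting $Y_j:=R_0\cup L_1\cup\cdots\cup L_j$ and $T_j:=T\cap Y_j$, I argue by induction on $j$. For $j=0$, Lemma~\ref{normalitangentiristretti} gives $N_{R_0}\cong\mathcal{O}_{\mathbb{P}^1}(r+2)^{\oplus(r-1)}$, whence $N_{R_0}(-T_0)\cong\mathcal{O}_{\mathbb{P}^1}(-1)^{\oplus(r-1)}$ has vanishing $H^1$. For the inductive step I apply sequences~(\ref{seq1}) and~(\ref{seq2}) of Lemma~\ref{sequences} to $Y_j=Y_{j-1}\cup L_j$ and twist by $-T_j$: on $Y_{j-1}$ one has $N'_{Y_{j-1}}(-T_j)\cong N'_{Y_{j-1}}(-T_{j-1})$ (since $r_j\notin Y_{j-1}$), while $N_{L_j}(-q_j)(-T_j)\cong N_{L_j}(-q_j-r_j)\cong\mathcal{O}_{\mathbb{P}^1}(-1)^{\oplus(r-1)}$ (as $T_j\cap L_j=\{r_j\}$ and $N_{L_j}\cong\mathcal{O}_{\mathbb{P}^1}(1)^{\oplus(r-1)}$), and the skyscraper ${T^1_{Y_j}}_{|q_j}$ contributes nothing to $H^1$; this reduces the vanishing for $Y_j$ to that for $Y_{j-1}$. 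Finally the cohomology of $0\to N'_Y(-T)\to N_Y(-T)\to T^1_Y\to 0$ gives at once $h^1(N_Y(-T))=0$ and the surjectivity of $H^0(N_Y(-T))\to H^0(T^1_Y)$.

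Now Lemma~\ref{nc-t} says the functor of deformations of $Y$ in $\mathbb{P}^r$ fixing $T$ is unobstructed with tangent space $H^0(N_Y(-T))$; by the surjectivity onto $H^0(T^1_Y)$ its general member $R_c$ smooths all $c$ nodes, hence is a smooth (necessarily nondegenerate, since $T$ contains $r+3$ general points) rational curve of degree $r+c$ through $T$. A standard semicontinuity comparison --- between $h^0(N_Y(-T))$, the dimension at $[Y]$ and at $[R_c]$ of the Hilbert scheme of curves through $T$, and $h^0(N_{R_c}(-T))$ --- then forces $h^1(N_{R_c}(-T))=0$, so $[R_c]$ is a smooth point of that Hilbert scheme and the open locus of smooth rational curves of degree $r+c$ through $T$ is irreducible near $[R_c]$. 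The conditions ``$R_c\cap\Gamma=\{p_1,\dots ,p_{r+3}\}$, $R_c\cap E_h=\{r_h\}$, $R_c\cap E_j=\emptyset$ for $j>c$, all quasi-transverse'' are open on this locus and hold for the constructed $R_c$: every point of $R_c\cap E'$ lies near a point of $Y\cap E'$, near which $R_c$ is a small deformation, fixing that point, of the single component of $Y$ passing through it, which meets $E'$ transversally there, while $Y$ is disjoint from $\Gamma$ and from the $E_j$ away from $T$. Hence the general smooth rational curve of degree $r+c$ through $p_1,\dots ,p_{r+3},r_1,\dots ,r_c$ has all the required properties, which is exactly the statement.

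The step I expect to be the main obstacle is the last one: arranging simultaneously that $R_c$ be a \emph{general} smooth rational curve of degree $r+c$ through the prescribed points (so that Lemma~\ref{normalitangentiristretti} applies to $R_c$ later on) and that it meet the fixed curve $E'$ in \emph{exactly} the $r+3+c$ prescribed points and transversally. This amounts to controlling $R_c\cap E'$ under the deformation and excluding both extra intersection points and unexpected tangencies, and it is where the semicontinuity input and the fact that $R_c\cap E'$ stays clustered near $Y\cap E'$ are really used.
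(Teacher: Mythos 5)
Your proposal is correct and follows essentially the same route as the paper: degenerate $R_c$ to a nodal, arithmetic-genus-zero union of a rational normal curve and $c$ lines through the prescribed points, prove $H^1(N(-S))=0$ and surjectivity onto $H^0(T^1)$ via the sequences of Lemma \ref{sequences} (the paper uses the equivalent Hartshorne--Hirschowitz/Mayer--Vietoris formulation), and smooth the nodes keeping the $r+3+c$ points fixed by Lemma \ref{nc-t}. The only difference is the incidence pattern --- the paper puts only $p_1,\dots,p_{r+3-c}$ on the rational normal curve and takes the lines $\langle p_{r+3-c+i},r_i\rangle$, so its degenerate curve moves in a positive-dimensional family for $c\ge 1$, whereas your $R_0$ is the unique rational normal curve through all of $p_1,\dots,p_{r+3}$ and hence rigid, so its disjointness from the $E_j$ is a genericity condition on the given configuration rather than a choice; this is harmless here (and the paper's own proof relies on the same implicit genericity), since the numerology and the cohomological argument are identical.
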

\begin{proof}
We may find $\Gamma$ passing through $r+3$ general points of $\mathbb {P}^r$ by Remark \ref{oss1}.
Consider the $c$ lines $l_i=<p_{r+3-c+i},r_i>$, $i=1,...,c$, which are necessarily pairwise disjoint and intersecting $E'$ quasi-transversally due to the generality assumption on the points.\\
Let $q_i$ be a general point on $l_i$ for all $i$, let $T \doteq \left\{p_1,...,p_{r+3-c}\right\}$, and let $D \subset \mathbb{P}^r$ be the rational normal curve intersecting $\Gamma$ at $T$ and $l_i$ at $q_i$ for all $i$. By Lemma \ref{r+3punti} and the generality assumption on the $q_i$, one has that $D \cap E' =T$ and $D \cap l_i=q_i$ with quasi-transversal intersections.\\
Let $S \doteq \left\{p_1,...,p_{r+3},r_1,...,r_c\right\}$ and $W \doteq \left\{q_1,...,q_c\right\}$. We want to show that the reducible curve $D' \doteq D \cup l_1 \cup ... \cup l_c$ can be deformed to a smooth curve keeping the points of $S$ fixed.\\
By Lemma \ref{nc-t} it is sufficient to show that, for all $q_i \in W$, the map $H^{0}(N_{D'}(-S)) \rightarrow H^{0}(T^1_{q_i}(-S))$ is surjective and that $H^{1}(N_{D'}(-S))=(0)$.\\
From \cite{HH}, Corollary 3.2, there exist exact sequences
$$0 \rightarrow N_{l_i} \rightarrow {N_{D'}}_{|l_i} \rightarrow T^1_{q_i} \rightarrow 0$$
$$0 \rightarrow N_D \rightarrow {N_{D'}}_{|D} \rightarrow T^1_{W} \rightarrow 0.$$
The first one tensorized by $\mathcal{O}_{D'}(-S)$ gives $H^{1}({N_{D'}}(-S)_{|l_i})=(0)$ and the surjectivity of the map $H^{0}({N_{D'}}(-S)_{|l_i}) \rightarrow H^{0}(T^1_{q_i}(-S))$ for all $i$, while the second one tensorized by $\mathcal{O}_{D'}(-S-W)$ gives $H^1({N_{D'}}_{|D}(-S-W))=(0)$. We use, respectively, the fact that $H^{1}(N_{l_i}(-S))=(0)$ and $H^{1}(N_{D}(-S-W))=(0)$ by Lemma \ref{normalitangentiristretti}.\\
The exact sequence
$$0 \rightarrow {N_{D'}}_{|D}(-S-W) \rightarrow {N_{D'}}_{|D}(-S) \rightarrow {N_{D'}}_{|W}(-S) \rightarrow 0$$
then gives $H^1({N_{D'}}_{|D}(-S))=(0)$ and the surjectivity of the cohomology map\\ $H^{0}({N_{D'}}_{|D}(-S)) \rightarrow H^{0}({N_{D'}}_{|W}(-S))$.\\
From the surjecivity of the above considered maps, using the Mayer-Vietoris sequence
\begin{equation}
\label{nd'nd'dnd'li}
0 \rightarrow N_{D'} \rightarrow {N_{D'}}_{|D} \oplus \bigoplus_{i=1}^{c} {N_{D'}}_{|l_i} \rightarrow {N_{D'}}_{|W} \rightarrow 0
\end{equation}
tensorized by $\mathcal{O}_{D'}(-S)$, it follows that the maps $H^{0}(N_{D'}(-S)) \rightarrow H^{0}(T^1_{q_i}(-S))$ are surjective for all $i$. Moreover, from the fact that $H^{1}({N_{D'}}(-S)_{|l_i})=H^1({N_{D'}}_{|D}(-S))=(0)$ for all $i$, it follows that $H^1(N_{D'}(-S))=(0)$.\\
The last statement immediately follows from the fact that the scheme parameterizing smooth rational curves of degree $r+c$ through $S$ has dimension $h^{0}(N_{D'}(-S))$.
\end{proof}
Let $\Gamma$ be as in Lemma \ref{gamma} and let $c \leq r+3$ be a non-negative integer. By Lemma \ref{r+3punti}, there exist pairwise disjoint rational normal curves in $\mathbb{P}^r$ intersecting $\Gamma$ quasi-transversally at $r+2$ or $r+3$ points, say $X_i$, $i=1,...,a$ and $Y_j$, $j=1,...,b$, respectively. Assume $a \geq c$. By Lemma \ref{R_cr+3+c} there exists a general smooth rational curve $R_c$ of degree $r+c$ intersecting quasi-transversally $\Gamma$ at $r+3$ points, each of the curves $X_1,...,X_c$ at one point and not intersecting $X_i$, $i >c$ (if present) and $Y_j$ for all $j$. Define $Z=Z_{c,a,b} \doteq \Gamma \cup X_1 \cup ... \cup X_a \cup Y_1 \cup ... \cup Y_b \cup R_c$. Moreover, define $\Delta_{X_i} \doteq \Gamma \cap X_i$, $\Delta_{Y_j} \doteq \Gamma \cap Y_j, \Delta_{R_c} \doteq \left( \Gamma \cup X_1 \cup ... \cup X_c\right) \cap R_c$.
\begin{lem}
\label{nxi-deltai}
Notation as above, one has
$$h^{0}({T_{\mathbb{P}^r}}_{|R_c}(-\Delta_{R_c}))=h^{0}({T_{\mathbb{P}^r}}_{|X_i}(-\Delta_{X_i}))=h^{0}({T_{\mathbb{P}^r}}_{|Y_j}(-\Delta_{Y_j}))=0$$
and
$$h^1(N_{R_c}(-\Delta_{R_c}))=h^{1}(N_{X_i}(-\Delta_{X_i}))=h^{1}(N_{Y_j}(-\Delta_{Y_j}))=0$$ for all $0 \leq c$, $i=1,...,a$, $j=1,...,b$.
\end{lem}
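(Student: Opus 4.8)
The plan is to reduce each of the vanishing statements to a one-line cohomology computation for a split vector bundle on $\mathbb{P}^1$, using the explicit splitting types recorded in Lemma \ref{normalitangentiristretti}. The first step is to observe that $X_i$, $Y_j$ and $R_c$ are all \emph{smooth} rational curves, so that ${T_{\mathbb{P}^r}}_{|X_i}$, $N_{X_i}$ and their analogues are genuine vector bundles on the respective copies of $\mathbb{P}^1$, and that twisting by $-\Delta_{X_i}$, $-\Delta_{Y_j}$, $-\Delta_{R_c}$ simply lowers each line-bundle summand by the degree of a reduced effective divisor on that $\mathbb{P}^1$.

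The second step is to read off the degrees of those divisors from the construction preceding the lemma: $X_i$ meets $\Gamma$ quasi-transversally at $r+2$ points, so $\Delta_{X_i}$ is a reduced divisor of degree $r+2$ on $X_i$; $Y_j$ meets $\Gamma$ quasi-transversally at $r+3$ points, so $\Delta_{Y_j}$ has degree $r+3$ on $Y_j$; and $R_c$ meets $\Gamma$ quasi-transversally at $r+3$ points and each of $X_1,\dots,X_c$ at one further, distinct point, so $\Delta_{R_c}$ has degree $r+3+c$ on $R_c$. The third step is to quote the splitting types from Lemma \ref{normalitangentiristretti}: since $X_i$ and $Y_j$ are rational normal curves, ${T_{\mathbb{P}^r}}_{|X_i}\cong{T_{\mathbb{P}^r}}_{|Y_j}\cong\mathcal{O}_{\mathbb{P}^1}(r+1)^{\oplus r}$ and $N_{X_i}\cong N_{Y_j}\cong\mathcal{O}_{\mathbb{P}^1}(r+2)^{\oplus r-1}$; and since $R_c$ is a rational normal curve when $c=0$ and a general smooth rational curve of degree $r+c$ when $c\geq 1$ (by the last sentence of Lemma \ref{R_cr+3+c}), in either case ${T_{\mathbb{P}^r}}_{|R_c}$ is a direct sum of line bundles of degrees $r+c+1$ and $r+c+2$, and $N_{R_c}$ is a direct sum of line bundles of degrees $r+c+2$ and $r+c+3$.

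The conclusion then follows by subtracting the divisor degrees. On $X_i$ one gets ${T_{\mathbb{P}^r}}_{|X_i}(-\Delta_{X_i})\cong\mathcal{O}_{\mathbb{P}^1}(-1)^{\oplus r}$ and $N_{X_i}(-\Delta_{X_i})\cong\mathcal{O}_{\mathbb{P}^1}^{\oplus r-1}$; on $Y_j$ one gets ${T_{\mathbb{P}^r}}_{|Y_j}(-\Delta_{Y_j})\cong\mathcal{O}_{\mathbb{P}^1}(-2)^{\oplus r}$ and $N_{Y_j}(-\Delta_{Y_j})\cong\mathcal{O}_{\mathbb{P}^1}(-1)^{\oplus r-1}$; and on $R_c$, subtracting $r+3+c$ turns ${T_{\mathbb{P}^r}}_{|R_c}(-\Delta_{R_c})$ into a direct sum of copies of $\mathcal{O}_{\mathbb{P}^1}(-2)$ and $\mathcal{O}_{\mathbb{P}^1}(-1)$, and $N_{R_c}(-\Delta_{R_c})$ into a direct sum of copies of $\mathcal{O}_{\mathbb{P}^1}(-1)$ and $\mathcal{O}_{\mathbb{P}^1}$. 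Since $h^0(\mathcal{O}_{\mathbb{P}^1}(e))=0$ for $e<0$ and $h^1(\mathcal{O}_{\mathbb{P}^1}(e))=0$ for $e\geq-1$, every cohomology group in the statement vanishes.

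The one place that is not purely mechanical --- indeed the only geometric content of the lemma --- is the bookkeeping in the second step: confirming that each intersection with $\Gamma$ (and, for $R_c$, with $X_1,\dots,X_c$) is quasi-transversal and of exactly the asserted cardinality. But this is precisely what Lemmas \ref{r+3punti} and \ref{R_cr+3+c} were designed to provide, so no fresh argument is needed beyond the elementary degree count above.
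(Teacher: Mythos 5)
Your proposal is correct and matches the paper's approach: the paper's entire proof is the single sentence ``It is an immediate consequence of Lemma \ref{normalitangentiristretti},'' and your argument simply makes explicit the degree bookkeeping ($\deg\Delta_{X_i}=r+2$, $\deg\Delta_{Y_j}=r+3$, $\deg\Delta_{R_c}=r+3+c$) and the resulting twists of the split bundles that the authors leave to the reader. All the splitting types and the resulting vanishings check out, including the $c=0$ case handled via the rational normal curve formulas.
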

\begin{proof}
It is an immediate consequence of Lemma \ref{normalitangentiristretti}.
\end{proof}
\begin{prop}
\label{zgoodproperties}
Let $Z$ be as above. Then one has $h^{0}({T_{\mathbb{P}^r}}_{|Z})=(r+1)^2-1$ and $H^{1}(N'_Z)=(0)$.
\end{prop}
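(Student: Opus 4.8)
The plan is to obtain $Z$ by gluing its components one at a time — starting from $\Gamma$, then the $X_i$, then the $Y_j$, and finally $R_c$ — and to propagate both assertions along this process using the exact sequences of Lemma~\ref{sequences}. After some components have been attached, write $C_1$ for the curve built so far and $C_2$ for the next component, so that $C = C_1 \cup C_2$; by the construction of the $X_i,Y_j$ (Lemma~\ref{r+3punti}) and of $R_c$ (Lemma~\ref{R_cr+3+c}), at every step $C_1$ is a connected reduced nodal curve, $C_2$ is a smooth rational curve, $C$ is nodal, and $\Delta := C_1 \cap C_2$ is a reduced $0$-dimensional scheme supported at smooth points of $C_1$ — precisely the hypotheses of Lemma~\ref{sequences}. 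It is essential to glue $R_c$ only after $X_1,\dots,X_c$ (the order $\Gamma, X_1,\dots,X_a, Y_1,\dots,Y_b, R_c$ does this, since $c\le a$), so that $C_1 \cap R_c = \Delta_{R_c} = (\Gamma \cup X_1 \cup \dots \cup X_c) \cap R_c$. Note also that at each step $\Delta$ is one of $\Delta_{X_i}$, $\Delta_{Y_j}$, $\Delta_{R_c}$, so that Lemma~\ref{nxi-deltai} applies to $C_2$ and $\Delta$.

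To prove $H^1(N'_Z)=0$ I argue by induction on the number of glued components, the base case $H^1(N'_\Gamma)=0$ being Lemma~\ref{gamma}. For the inductive step, $h^1(N_{C_2}(-\Delta))=0$ by Lemma~\ref{nxi-deltai} while ${T^1_C}_{|\Delta}$ is a sheaf of finite length, so the cohomology sequence of (\ref{seq2}) gives $H^1(\mathcal{I}_{C_1/C}\otimes N_C)=0$; then the cohomology sequence of (\ref{seq1}), combined with the inductive hypothesis $H^1(N'_{C_1})=0$, yields $H^1(N'_C)=0$. For $h^0({T_{\mathbb{P}^r}}_{|Z})$ I first get the upper bound, again by induction: since $h^0({T_{\mathbb{P}^r}}_{|C_2}(-\Delta))=0$ by Lemma~\ref{nxi-deltai}, the cohomology sequence of (\ref{seq3}) shows that the restriction $H^0({T_{\mathbb{P}^r}}_{|C}) \to H^0({T_{\mathbb{P}^r}}_{|C_1})$ is injective, and starting from $h^0({T_{\mathbb{P}^r}}_{|\Gamma}) = (r+1)^2 - 1$ (Lemma~\ref{gamma}) this gives $h^0({T_{\mathbb{P}^r}}_{|Z}) \le (r+1)^2 - 1$.

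For the reverse inequality I use that $Z$ is connected, reduced and nondegenerate (the latter is inherited from $\Gamma$). Restricting the Euler sequence $0 \to \mathcal{O}_{\mathbb{P}^r} \to \mathcal{O}_{\mathbb{P}^r}(1)^{\oplus (r+1)} \to T_{\mathbb{P}^r} \to 0$ to $Z$, which stays exact since $T_{\mathbb{P}^r}$ is locally free, and comparing the resulting long exact sequence with the one on $\mathbb{P}^r$, the fact that $H^0(\mathcal{O}_{\mathbb{P}^r}) \to H^0(\mathcal{O}_Z)$ is an isomorphism and $H^0(\mathcal{O}_{\mathbb{P}^r}(1)) \to H^0(\mathcal{O}_Z(1))$ is injective forces, by a diagram chase, the restriction $H^0(T_{\mathbb{P}^r}) \to H^0({T_{\mathbb{P}^r}}_{|Z})$ to be injective; hence $h^0({T_{\mathbb{P}^r}}_{|Z}) \ge h^0(T_{\mathbb{P}^r}) = (r+1)^2 - 1$, and together with the upper bound we conclude. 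I do not expect a serious obstacle: once Lemmas~\ref{gamma}, \ref{r+3punti}, \ref{R_cr+3+c} and \ref{nxi-deltai} are in hand, the argument is a formal induction, the only input not read off directly from an exact sequence being the lower bound for $h^0({T_{\mathbb{P}^r}}_{|Z})$, which is exactly where nondegeneracy of $Z$ has to be used.
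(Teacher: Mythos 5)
Your proof is correct and follows essentially the same route as the paper: the same inductive gluing in the order $\Gamma, X_1,\dots,X_a,Y_1,\dots,Y_b,R_c$, using sequence (\ref{seq2}) then (\ref{seq1}) for the vanishing of $H^{1}(N'_Z)$ and sequence (\ref{seq3}) for the restricted tangent bundle. Your only addition is the explicit Euler-sequence diagram chase giving the lower bound $h^{0}({T_{\mathbb{P}^r}}_{|Z})\ge (r+1)^2-1$ from nondegeneracy, a point the paper leaves implicit (it is the content of Proposition \ref{condizioniequivalentialinearmentenormaleerangomassimo}); this is a worthwhile clarification, since the induction on (\ref{zh}) alone only yields the upper bound.
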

\begin{proof}
Let $D_{0} \doteq \Gamma$, $D_i \doteq X_i$, $i=1,...,a$, $D_j \doteq Y_j$, $j=a+1,...,a+b$, $D_{a+b+1} \doteq R_c$.
Let $Z_h \doteq D_{0} \cup ... \cup D_h$, $h=0,...,a+b+1$. Note that $Z_{a+b+1}=Z$.\\
Sequence (\ref{seq3}) of Lemma \ref{sequences} gives
\begin{equation}
\label{zh}
0 \rightarrow {T_{\mathbb{P}^r}}_{|D_{h+1}}(-\Delta_{D_{h+1}}) \rightarrow {T_{\mathbb{P}^r}}_{|Z_{h+1}} \rightarrow {T_{\mathbb{P}^r}}_{|Z_{h}} \rightarrow 0
\end{equation}
for all $h=0,...,a+b$.\\
By Lemma \ref{gamma} one has $h^{0}({T_{\mathbb{P}^r}}_{|Z_0})=(r+1)^2-1$. Since from Lemma \ref{nxi-deltai} one has $H^{0}({T_{\mathbb{P}^r}}_{|D_{h+1}}(-\Delta_{D_{h+1}}))=(0)$ for all $h=0,...,a+b$, by inductively adding components to $Z_h$ in sequence (\ref{zh}) one obtains $h^{0}({T_{\mathbb{P}^r}}_{|Z})=(r+1)^2-1$.\\
Sequence (\ref{seq2}) of Lemma \ref{sequences} gives
$$0 \rightarrow N_{D_{h+1}}(-\Delta_{D_{h+1}}) \rightarrow \mathcal{I}_{Z_{h}/Z_{h+1}} \otimes N_{Z_{h+1}} \rightarrow {T^1_{Z_{h+1}}}_{|\Delta_{D_{h+1}}} \rightarrow 0$$
for all $h=0,...,a+b$. By Lemma \ref{nxi-deltai}, it follows that $H^{1}(\mathcal{I}_{Z_{h}/Z_{h+1}} \otimes N_{Z_{h+1}})=(0)$ for all $h=0,...,a+b$. Sequence (\ref{seq1}) of Lemma \ref{sequences} gives
$$0 \rightarrow \mathcal{I}_{Z_{h}/Z_{h+1}} \otimes N_{Z_{h+1}} \rightarrow N'_{Z_{h+1}} \rightarrow N'_{Z_h} \rightarrow 0.$$
Since $H^{1}(N'_{Z_0})=(0)$ by Lemma \ref{gamma}, it follows that $H^{1}(N'_Z)=(0)$.
\end{proof}
We now have all the elements to prove our main result.

\begin{teo}\label{main}
Let $r \geq 3$ and $d \geq (2r+2)r$ be integers. For all integers $g, \delta, i$ such that
$d+\lfloor \frac{d}{r} \rfloor-r \leq g \leq d+ \lfloor \frac{d}{r} \rfloor$,
$1 \le i \le \lfloor \frac{d}{r} \rfloor - 2$ and $(r+3)(i-1) \leq \delta \leq g+i-1$,
there exists a connected reduced nondegenerate nodal curve $C^i_{\delta} \subset \mathbb{P}^r$ of arithmetic genus $g$ with
$\delta$ nodes and $i$ irreducible components (all but one smooth and rational), such that $[C^i_{\delta}]$ is a smooth
point of an irreducible component $V \subset \mathscr{V}^{r}_{d,g, \delta}$ having the expected number of moduli.
\end{teo}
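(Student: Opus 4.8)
By Proposition \ref{condizioniperexpected} together with Proposition \ref{condizioniequivalentialinearmentenormaleerangomassimo}, it suffices to exhibit, for every admissible triple $(g,\delta,i)$, a connected reduced nondegenerate nodal curve $C^i_\delta\subset\mathbb P^r$ of degree $d$ and arithmetic genus $g$, with exactly $\delta$ nodes and exactly $i$ irreducible components, all but one of them smooth and rational, and such that $h^1(N'_{C^i_\delta})=(0)$ and $h^0({T_{\mathbb P^r}}_{|C^i_\delta})=(r+1)^2-1$. The plan is to produce $C^i_\delta$ as a suitably controlled partial smoothing of a curve $Z$ of the type considered in Proposition \ref{zgoodproperties}.

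First I would fix the combinatorics of $Z$. Using $\deg\Gamma=2r$, $p_a(\Gamma)=r+1$, $\deg X_i=\deg Y_j=r$, $\deg R_c=r+c$ and the genus formula for connected nodal curves, one checks that a curve $Z=Z_{c,a,b}$ as constructed just before Proposition \ref{zgoodproperties} has degree $(a+b+3)r+c$, arithmetic genus $(r+1)(a+1)+(r+2)(b+1)+c$, and $a+b+2$ irreducible components, its nodes being the disjoint union of the $r+1$ nodes of $\Gamma$, of the $\Delta_{X_i}$ (each of length $r+2$), of the $\Delta_{Y_j}$ (each of length $r+3$) and of $\Delta_{R_c}$ (of length $r+3+c$); in particular $Z$ has exactly $g+(a+b+1)$ nodes. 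A short arithmetic then shows that for the prescribed $d$ one may take $c\equiv d\pmod r$ with $0\le c\le r+3$ and choose $a\ge c$ and $b\ge 0$ so as to realize exactly the values $d+\lfloor d/r\rfloor-r\le g\le d+\lfloor d/r\rfloor$ of the arithmetic genus, while keeping $a+b+2\ge i$ and retaining at least $i-1$ components that meet the rest of $Z$ in at least $r+3$ points — namely $Y_j$'s and $R_c$, if necessary incorporating further general rational curves of the type in Lemma \ref{R_cr+3+c} as additional components (this is where $d\ge(2r+2)r$ and $i\le\lfloor d/r\rfloor-2$ enter). By Proposition \ref{zgoodproperties} one has $h^1(N'_Z)=(0)$ and $h^0({T_{\mathbb P^r}}_{|Z})=(r+1)^2-1$.

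Next I would pass from $Z$ to $C^i_\delta$ by a controlled partial smoothing. Choose $i-1$ components of $Z$ meeting the rest in at least $r+3$ points — the \emph{tails} — and let $N_{\mathrm{mand}}$ be the set of their nodes of attachment, so $\#N_{\mathrm{mand}}\ge(r+3)(i-1)$; the remaining $m=a+b+3-i$ components (namely $\Gamma$ together with $a+b+2-i$ further rational curves) are the ones I want to merge into a single component $C_0$. Since $h^1(N'_Z)=(0)$, the argument of Proposition \ref{vgddelta}$(iii)$ allows one to pick any set $P$ of nodes of $Z$ with $N_{\mathrm{mand}}\subseteq P$ and such that $P$ does not contain all attachment nodes of any tail to be merged, and to find a flat deformation of $Z$ that is locally trivial at the nodes of $P$ and smooths all the others. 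Smoothing at least one attachment node of each of the $a+b+2-i$ curves to be merged turns them into one connected component $C_0$, while preserving all the nodes of $N_{\mathrm{mand}}$ keeps the $i-1$ tails as distinct components; hence the general member of this family is a connected nodal curve with exactly $i$ irreducible components, all but $C_0$ smooth and rational, and exactly $\#P$ nodes. Letting $\#P$ range between its minimum $(r+3)(i-1)$ (take $P=N_{\mathrm{mand}}$) and its maximum $g+i-1$ (preserve every node except exactly one attachment node per merged tail; here one uses $\#(\text{nodes of }Z)-(a+b+2-i)=g+i-1$), one obtains $C^i_\delta$ with any prescribed $\delta$ in the stated range $(r+3)(i-1)\le\delta\le g+i-1$.

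Finally, being nondegenerate, being linearly normal with $\mu_0(\mathcal O(1))$ surjective — equivalently, by Proposition \ref{condizioniequivalentialinearmentenormaleerangomassimo}, having $h^0({T_{\mathbb P^r}}_{|\cdot})=(r+1)^2-1$ — and having $h^1(N'_{\cdot})=(0)$ are open conditions on the parameter space of the above family, and they hold at the special fibre $Z$; hence they hold for the general member $C^i_\delta$. Therefore $h^1(N'_{C^i_\delta})=(0)$ and $h^0({T_{\mathbb P^r}}_{|C^i_\delta})=(r+1)^2-1$, and Proposition \ref{condizioniperexpected} applies, giving the assertion. I expect the main obstacle to be the joint bookkeeping of the last two steps: arranging the building blocks of $Z$ so that the numerical ranges of $d$, $g$ and $i$ in the statement are met exactly, and then selecting the preserved set of nodes so as to control simultaneously the number of surviving nodes, the number of irreducible components, and the connectedness of $C^i_\delta$.
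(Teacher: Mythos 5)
Your proposal is correct and follows essentially the same route as the paper: realize $d$ and $g$ by the numerology of $Z_{c,a,b}$, invoke Proposition \ref{zgoodproperties}, and then use $H^1(N'_Z)=(0)$ to perform a partial smoothing that preserves a prescribed set of nodes (keeping $i-1$ rational tails intact and merging the rest), concluding via Propositions \ref{condizioniperexpected} and \ref{condizioniequivalentialinearmentenormaleerangomassimo} and semicontinuity. The only (harmless) deviations are that the paper fixes $0\le c<r$ rather than $c\le r+3$, and that it selects the $i-1$ preserved tails among the $X_i$'s and $Y_j$'s only (so the mandatory node count is at most $(r+3)(i-1)$, not at least), which is what actually guarantees the lower end $\delta=(r+3)(i-1)$ of the range is attained.
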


\begin{proof}
We construct the curves $C^i_{\delta}$ as deformations of the reducible curves $Z=Z_{c,a,b}$ constructed above. One has $\deg(Z)=(a+b+3)r+c$ and $p_a(Z)=(a+b+2)(r+1)+b+c+1$. Let $d=\deg(Z)$ and let $0 \leq c<r$ be the integer such that $d \equiv c \mod r$. Since $d \geq (2r+2)r$ and $c < r$, it follows that $a+b \geq 2r-1$. Let $0 \leq b \leq r$ be the integer such that $g \equiv b+c+1 \mod r+1$. One has that $p_a(Z)=\left(\frac{d-c}{r}-1\right)(r+1)+b+c+1=d+\frac{d}{r}-\frac{c}{r}-r+b$. Since $a+b \geq 2r-1$ and $\frac{d}{r}-\frac{c}{r}=\lfloor \frac{d}{r} \rfloor$, it follows that one can find a pair $(a,b)$ with $a \geq c$ such that $p_a(Z)=g$ for all $d+\lfloor \frac{d}{r} \rfloor -r \leq g \leq d+ \lfloor \frac{d}{r} \rfloor $.\\
We have thus showed the existence of a reduced nodal curve $Z$ whose degree equals $d$ and whose arithmetic genus equals $g$. Since
$H^{1}(N'_Z)=(0)$ by Proposition \ref{zgoodproperties}, fixed any integer $i$ with $1 \leq i \leq a+b+1=\lfloor \frac{d}{r} \rfloor -2$, we can deform $Z$ to a nodal curve with exactly $i$ irreducible components by preserving all nodes on $i-1$ (smooth and rational) irreducible components of $X_1 \cup ... \cup X_a \cup Y_1 \cup ... \cup Y_b$ (hence at most $(r+3)(i-1)$ nodes) and by smoothing at least one node for all the remaining irreducible components of $X_1 \cup ... \cup X_a \cup Y_1 \cup ... \cup Y_b \cup R_c$. The fact that $V$ has the expected number of moduli follows from Proposition \ref{zgoodproperties} applied to Propositions \ref{condizioniperexpected} and \ref{condizioniequivalentialinearmentenormaleerangomassimo}, and the upper semicontinuity of the cohomology.
\end{proof}
The following result proves that, if $\Gamma \subset \mathbb{P}^r$ is a connected reduced nondegenerate nodal curve such that $h^{0}({T_{\mathbb{P}^r}}_{|\Gamma})=(r+1)^2-1$ and $h^{1}(N'_{\Gamma})=0$, then there are connected reduced nondegenerate nodal curves $\Gamma'$ in $\mathbb{P}^r$ with degree and arithmetic genus pretty close to $\deg(\Gamma)$ and $p_a(\Gamma)$, respectively, and whose restricted tangent bundle and equisingular normal sheaf have the same properties, so that $\Gamma'$ and each of its partial smoothings, if stable, are parameterized by a smooth point of an irreducible component of Hilbert schemes $\mathscr{V}^{r}_{\deg{\Gamma'},p_a(\Gamma'),\delta}$ having the expected number of moduli.
\begin{lem}
\label{lemmar+2punti}
Let $S \subset \mathbb{P}^r$ be a 0-dimensional scheme of $r+2$ points in linearly general position. For each $O \in S$, let $V_O$ be a family of lines passing through $O$, of dimension less or equal than $r-2$. Fix a locally closed subset $W \subset \mathbb{P}^r \smallsetminus S$ such that $\dim W \leq r-2$. Then there exists a rational normal curve $R \subset \mathbb{P}^r$ such that $S \subset R$, $R \cap W = \emptyset$ and for each $O \in S$ the tangent line $T_{O}R$ of $R$ at $O$ does not belong to $V_{O}$.
\end{lem}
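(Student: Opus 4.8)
The plan is to produce the rational normal curve $R$ by a dimension count inside the space of rational normal curves through the fixed scheme $S$, showing that the ``bad'' conditions (meeting $W$, or having some tangent line in $V_O$) cut out proper subvarieties. Since $S$ consists of $r+2$ points in linearly general position, the set $\mathcal{R}_S$ of rational normal curves containing $S$ is nonempty, irreducible, and of dimension $h^{0}(N_{R_0}(-S))$ for any $R_0 \in \mathcal{R}_S$; by Lemma \ref{normalitangentiristretti} one has $N_{R_0} \cong \mathcal{O}_{\mathbb{P}^1}(r+2)^{\oplus r-1}$, so $h^{0}(N_{R_0}(-S)) = (r-1)(r+3-(r+2)) = r-1 > 0$. (Here I am implicitly using, as in the proof of Lemma \ref{r+3punti}, that the generic such $R$ contains $S$ quasi-transversally and that the family is identified with an open subset of $H^{0}(N_{R_0}(-S))$ via Lemma \ref{nc-t}.) Thus $\mathcal{R}_S$ has dimension $r-1$, and it suffices to check that each forbidden locus has dimension at most $r-2$.

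First I would handle the condition $R \cap W \ne \emptyset$. For a fixed point $x \in W$ with $\{x\} \cup S$ in linearly general position, the subfamily $\{R \in \mathcal{R}_S : x \in R\}$ is identified with (an open subset of) $H^{0}(N_{R_0}(-S-x))$, which has dimension $(r-1)(r+3-(r+3)) = 0$; if $\{x\} \cup S$ fails to be in linearly general position the subfamily is empty (no rational normal curve passes through $r+3$ such points). Hence $\{R \in \mathcal{R}_S : R \cap W \ne \emptyset\}$ fibers over $W$ with $0$-dimensional fibers, so it has dimension $\le \dim W \le r-2$. Next, for a fixed $O \in S$ and a fixed line $\ell \in V_O$, I claim $\{R \in \mathcal{R}_S : T_O R = \ell\}$ has dimension $\le r-3$: imposing a prescribed tangent direction at the smooth point $O$ is two extra linear conditions on the normal bundle sections (it is the condition $R \supset O'$ for $O'$ the length-two subscheme of $\ell$ supported at $O$, i.e. a section vanishing to order $2$ at $O$), cutting $H^{0}(N_{R_0}(-S))$ down by $2$ to dimension $r-3$. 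Letting $\ell$ vary in the at-most-$(r-2)$-dimensional family $V_O$, the locus $\{R : T_O R \in V_O\}$ has dimension $\le (r-2) + (r-3) < r-1$; summing over the finitely many $O \in S$ still gives a proper closed subset.

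Therefore the complement in $\mathcal{R}_S$ of all these forbidden loci is a nonempty open dense subset, and any $R$ in it has the required properties: $S \subset R$, $R \cap W = \emptyset$, and $T_O R \notin V_O$ for every $O \in S$.

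The step I expect to be the main obstacle is making the tangent-direction count fully rigorous. The identification of $\mathcal{R}_S$ with an open subset of $H^{0}(N_{R_0}(-S))$ is a first-order statement (Lemma \ref{nc-t}), and one must argue that the global geometric conditions ``$x \in R$'' and ``$T_O R = \ell$'' genuinely drop the dimension of the family of \emph{rational normal} curves (not merely of the tangent space) by the expected amount — equivalently, that these families are again smooth of the expected dimension, or at least that the relevant normal bundles twisted down by the extra conditions have vanishing $h^1$, which holds here since $N_{R_0}(-S-x) \cong \mathcal{O}_{\mathbb{P}^1}(1)^{\oplus r-1}$ and $N_{R_0}(-2O) \cong \mathcal{O}_{\mathbb{P}^1}(r)^{\oplus r-1}$ are nonspecial. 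Once this is in place the argument is a routine incidence-correspondence dimension count, entirely in the spirit of the genericity arguments already used in Lemmas \ref{r+3punti} and \ref{R_cr+3+c}.
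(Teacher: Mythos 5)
Your overall strategy --- a dimension count in the $(r-1)$-dimensional family $\mathcal{H}(S)$ of rational normal curves through $S$, showing that each ``bad'' locus is a proper subvariety --- is exactly the paper's, and your treatment of the condition $R \cap W \neq \emptyset$ (zero-dimensional fibers over $W$, hence a locus of dimension $\leq \dim W \leq r-2$) matches the paper's argument. The gap is in the tangent-direction count. You claim that fixing $T_O R = \ell$ imposes only $2$ conditions, so that the fiber $\{R \in \mathcal{H}(S) : T_O R = \ell\}$ has dimension $r-3$, and you then bound the locus $\{R : T_O R \in V_O\}$ by $(r-2)+(r-3)$. But $(r-2)+(r-3) = 2r-5$, which is $< r-1$ only when $r=3$; for every $r \geq 4$ your asserted inequality is false and the argument as written does not close. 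The underlying count is wrong: when curves are parameterized by sections of the normal bundle, a point of $\mathbb{P}^r$ imposes $\operatorname{rk} N_{R_0} = r-1$ conditions, not $2$ (the tangent direction at $O$ varies in a $\mathbb{P}^{r-1}$, so fixing it should cost $r-1$ conditions). Concretely, $T_O R = \ell$ means $R \supset S \cup O'$ with $O'$ the length-two scheme at $O$ along $\ell$, a scheme of total length $r+3$; on sections this is $H^0(N_{R_0}(-S-O)) = H^0(\mathcal{O}_{\mathbb{P}^1}(-1)^{\oplus r-1}) = 0$, so the fiber is $0$-dimensional --- in fact it consists of at most one curve, since by the Eisenbud--Harris theorem a length-$(r+3)$ scheme lies on at most one rational normal curve (and on none if it fails to be in linearly general position).

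With the corrected fiber dimension $0$, the locus $\{R : T_O R \in V_O\}$ has dimension at most $\dim V_O \leq r-2 < r-1$, and summing over the finitely many $O \in S$ the argument goes through. This is precisely what the paper does: it takes the unique rational normal curve $D_{l,O}$ through the degree-$(r+3)$ scheme $S \cup V_{l,O}$ and concludes directly that the set $\mathcal{B}$ of such curves has dimension $\leq r-2$. So the lemma and your general plan are sound, but the specific step ``two extra linear conditions'' must be replaced by the uniqueness statement (or equivalently by the vanishing of $h^0(N_{R_0}(-S-O))$) for the proof to be valid beyond $r=3$.
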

\begin{proof}
Let $\mathcal{H}(S)$ be the set of all rational normal curves in $\mathbb{P}^r$ containing $S$. $\mathcal{H}(S)$ is a nonempty algebraic variety of dimension $r-1$. For each $O \in S$ and each $l \in V_{O}$, let $V_{l,O}$ be a tangent vector of $l$ at $O$. Since $S \cup V_{l,O}$ is a $0$-dimensional scheme of degree $r+3$, it is contained in at most one rational normal curve, say $D_{l,O}$. Let $\mathcal{B}$ be the set of all rational normal curves $D_{l,O}$ such that $O \in S$ and $l \in V_{O}$. We have $\dim \mathcal{B} \leq r-2$.\\
For each $p \in \mathbb{P}^r \smallsetminus S$, there is at most one rational normal curve $D_{p}$ containing $p$ and $S$. Let $\mathcal{F}$ be the set of all rational normal curves $D_{p}$, $p \in W$. Since $\dim \mathcal{F} \leq \dim W \leq r-2$, we have $\mathcal{H}(S) \supsetneq \mathcal{B} \cup \mathcal{F}$.
\end{proof}
\begin{teo}
Let $\Gamma \subset \mathbb{P}^r$ be a connected reduced nondegenerate nodal curve with $\eta$ nodes such that $h^{0}({T_{\mathbb{P}^r}}_{|\Gamma})=(r+1)^2-1$ and $h^{1}(N'_{\Gamma})=0$. Then for all integers $0 \leq \delta \leq r+2+\eta$, there is a connected reduced nondegenerate nodal curve $\Gamma'_{\delta} \subset \mathbb{P}^r$ with $\delta$ nodes such that $\deg(\Gamma'_{\delta})=\deg{\Gamma}+r$, $p_a(\Gamma'_{\delta})=p_a(\Gamma)+r+1$, $h^{0}({T_{\mathbb{P}^r}}_{|\Gamma'_{\delta}})=(r+1)^2-1$ and $h^{1}(N'_{\Gamma'_{\delta}})=0$. In particular, if $\Gamma'_{\delta}$ is stable, then it is parameterized by a smooth point of an irreducible component $V \subset \mathscr{V}^{r}_{\deg{\Gamma'_{\delta}},p_a(\Gamma'_{\delta}),\delta}$ having the expected number of moduli.
\end{teo}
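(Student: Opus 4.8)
The plan is to obtain every $\Gamma'_{\delta}$ as a partial smoothing of the single reducible curve $\Gamma' \doteq \Gamma \cup R$, where $R \subset \mathbb{P}^r$ is a rational normal curve meeting $\Gamma$ quasi-transversally at exactly $r+2$ points, away from the nodes of $\Gamma$; this mimics the passage from $C_1 \cup C_2$ to $\Gamma \in \mathbb{G}$ in Lemma \ref{gamma}. First I would fix $r+2$ smooth points $S = \{p_1, \dots, p_{r+2}\}$ of $\Gamma$ in linearly general position (possible since $\Gamma$ is nondegenerate), and let $\mathcal{H}(S)$ be the family of rational normal curves through $S$, which is nonempty of dimension $r-1$. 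I want $R \in \mathcal{H}(S)$ with $R \cap \Gamma = S$ and $\Gamma \cup R$ nodal. For the first condition, an $R \in \mathcal{H}(S)$ meeting $\Gamma \smallsetminus S$ passes through some $q \in \Gamma \smallsetminus S$; for each such $q$ the rational normal curves through $S \cup \{q\}$ are finite in number (none unless $S \cup \{q\}$ is in linearly general position --- $r+3$ points on a rational normal curve always are --- and at most one in that case, by \cite{EH}, Theorem 1), so this ``bad'' locus has dimension $\le 1$. For the second, apply Lemma \ref{lemmar+2punti} with $V_O = \{T_O \Gamma\}$ for each $O \in S$ (a single line, $O$ being a smooth point of $\Gamma$, hence $\dim V_O = 0 \le r-2$) and $W = \mathrm{Sing}(\Gamma)$ (finite, so $\dim W = 0 \le r-2$): the curves in $\mathcal{H}(S)$ that meet $\mathrm{Sing}(\Gamma)$ or are tangent to $\Gamma$ at some point of $S$ form a locus of dimension $\le r-2$. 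Since $r \ge 3$ all three exceptional loci are proper in $\mathcal{H}(S)$, so a valid $R$ exists. Then $\Gamma' = \Gamma \cup R$ is a connected reduced nondegenerate nodal curve with $\eta + r + 2$ nodes, $\deg \Gamma' = \deg \Gamma + r$, and $p_a(\Gamma') = p_a(\Gamma) + p_a(R) + (r+2) - 1 = p_a(\Gamma) + r + 1$.

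Next I would verify the two cohomological conditions for $\Gamma'$, following Proposition \ref{zgoodproperties}. Put $\Delta \doteq \Gamma \cap R$, of length $r+2$. By Lemma \ref{normalitangentiristretti}, ${T_{\mathbb{P}^r}}_{|R} \cong \mathcal{O}_{\mathbb{P}^1}(r+1)^{\oplus r}$ and $N_R \cong \mathcal{O}_{\mathbb{P}^1}(r+2)^{\oplus r-1}$, so ${T_{\mathbb{P}^r}}_{|R}(-\Delta) \cong \mathcal{O}_{\mathbb{P}^1}(-1)^{\oplus r}$ and $N_R(-\Delta) \cong \mathcal{O}_{\mathbb{P}^1}^{\oplus r-1}$; in particular $h^0({T_{\mathbb{P}^r}}_{|R}(-\Delta)) = h^1({T_{\mathbb{P}^r}}_{|R}(-\Delta)) = h^1(N_R(-\Delta)) = 0$. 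Sequence (\ref{seq3}) of Lemma \ref{sequences}, with $C_1 = \Gamma$ and $C_2 = R$, then yields $H^0({T_{\mathbb{P}^r}}_{|\Gamma'}) \cong H^0({T_{\mathbb{P}^r}}_{|\Gamma})$, so $h^0({T_{\mathbb{P}^r}}_{|\Gamma'}) = (r+1)^2 - 1$. Sequence (\ref{seq2}), together with $h^1(N_R(-\Delta)) = 0$ and the vanishing of $h^1$ of a sheaf supported on points, gives $h^1(\mathcal{I}_{\Gamma/\Gamma'} \otimes N_{\Gamma'}) = 0$; then sequence (\ref{seq1}) and the hypothesis $h^1(N'_{\Gamma}) = 0$ give $h^1(N'_{\Gamma'}) = 0$.

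Finally, since $h^1(N'_{\Gamma'}) = 0$, Proposition \ref{vgddelta}(iii) provides, for each $0 \le \delta \le \eta + r + 2$, a one-parameter deformation of $\Gamma'$ that is locally trivial at a prescribed set $T$ of $\delta$ nodes and whose general member $\Gamma'_{\delta}$ is a nodal curve with exactly those $\delta$ nodes. Degree, arithmetic genus, connectedness, reducedness and nondegeneracy are preserved along this family. Moreover $h^0({T_{\mathbb{P}^r}}_{|\Gamma'_{\delta}}) = (r+1)^2-1$ by upper semicontinuity together with the lower bound $h^0({T_{\mathbb{P}^r}}_{|\Gamma'_{\delta}}) \ge h^0(T_{\mathbb{P}^r}) = (r+1)^2 - 1$ valid on the nondegenerate curve $\Gamma'_{\delta}$; and $h^1(N'_{\Gamma'_{\delta}}) = 0$ follows from upper semicontinuity applied to the sheaf $\ker(N_{\Gamma'} \to \bigoplus_{p \in T} T^1_p)$, which is flat over the deformation, restricts to $N'_{\Gamma'_{\delta}}$ on the general member, and has vanishing $h^1$ at $\Gamma'$ (since it contains $N'_{\Gamma'}$ with skyscraper quotient). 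Hence $\Gamma'_{\delta}$ has all the asserted properties. The ``in particular'' part is then immediate: if $\Gamma'_{\delta}$ is stable, Proposition \ref{condizioniequivalentialinearmentenormaleerangomassimo} turns $h^0({T_{\mathbb{P}^r}}_{|\Gamma'_{\delta}}) = (r+1)^2 - 1$ into linear normality plus maximal rank of $\mu_0(\mathcal{O}_{\Gamma'_{\delta}}(1))$, and Proposition \ref{condizioniperexpected}, with the further input $h^1(N'_{\Gamma'_{\delta}}) = 0$, gives that $[\Gamma'_{\delta}]$ is a smooth point of an irreducible component of $\mathscr{V}^{r}_{\deg{\Gamma'_{\delta}},p_a(\Gamma'_{\delta}),\delta}$ with the expected number of moduli. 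The main obstacle is the first step: ensuring, for an \emph{arbitrary} nondegenerate nodal $\Gamma$, that a rational normal curve can be found meeting $\Gamma$ in exactly $r+2$ points, quasi-transversally and off the singular locus --- this is exactly what Lemma \ref{lemmar+2punti} together with the dimension count in $\mathcal{H}(S)$ is for, and it is here that the hypothesis $r \ge 3$ is used.
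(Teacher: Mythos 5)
Your proposal is correct and follows essentially the same route as the paper: choose $S$ of $r+2$ smooth points in linearly general position, use Lemma \ref{lemmar+2punti} (you unpack part of its dimension count explicitly) to find a rational normal curve $R$ meeting $\Gamma$ quasi-transversally exactly along $S$, run the sequences of Lemma \ref{sequences} as in Proposition \ref{zgoodproperties} on $\Gamma' = \Gamma \cup R$, and conclude by partial smoothing, semicontinuity, and Propositions \ref{condizioniperexpected} and \ref{condizioniequivalentialinearmentenormaleerangomassimo}. Your extra care in justifying the semicontinuity of $h^1(N')$ along the partial smoothing via the flat sheaf $\ker\bigl(N_{\Gamma'} \to \bigoplus_{p \in T} T^1_p\bigr)$ is a welcome elaboration of a step the paper leaves implicit.
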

\begin{proof}
Let $S$ be a 0-dimensional subscheme of $\Gamma$ of $r+2$ smooth points in linearly general position, let, for each $O \in S$, $V_{O} \doteq T_{O}\Gamma$ i.e. the tangent line to $\Gamma$ at $O$, and let $W \doteq \Gamma \smallsetminus S$. By applying Lemma \ref{lemmar+2punti} with $S$, $V_{O}$, $W$ as above, one obtains that there exists a rational normal curve $R \subset \mathbb{P}^r$ intersecting $\Gamma$ quasi-transversally at (exactly) $r+2$ smooth points. Let $\Gamma' \doteq \Gamma \cup R$. The same computations done in the proof of Proposition \ref{zgoodproperties} give $h^{0}({T_{\mathbb{P}^r}}_{|\Gamma'})=(r+1)^2-1$ and $h^{1}(N'_{\Gamma'})=0$, hence by semicontinuity the same properties hold for $\Gamma'_{\delta}$ for any $0 \leq \delta \leq r+2+\eta$. Propositions \ref{condizioniperexpected} and \ref{condizioniequivalentialinearmentenormaleerangomassimo} prove the last claim.
\end{proof}
\end{section}
\small

\vspace{0.3cm}

\noindent
Edoardo Ballico \newline
Dipartimento di Matematica \newline
Universit\`a di Trento \newline
Via Sommarive 14 \newline
38123 Trento, Italy. \newline
E-mail address: ballico@science.unitn.it

\vspace{0.3cm}

\noindent
Luca Benzo \newline
Dipartimento di Matematica \newline
Universit\`a di Trento \newline
Via Sommarive 14 \newline
38123 Trento, Italy. \newline
E-mail address: luca.benzo@unitn.it

\vspace{0.3cm}

\noindent
Claudio Fontanari \newline
Dipartimento di Matematica \newline
Universit\`a di Trento \newline
Via Sommarive 14 \newline
38123 Trento, Italy. \newline
E-mail address: fontanar@science.unitn.it

\end{document}